\newcommand{\R}{\mathbb{R}}
\newcommand{\N}{\mathbb{N}}
\newcommand{\C}{\mathbb{C}}
\newcommand{\eps}{\varepsilon}
\newcommand{\ra}{\rangle}
\newcommand{\la}{\langle}
\newcommand{\del}{\partial}
\newcommand{\supp}{\mathrm{supp}}
\newcommand{\one}{\mathrm{\bf 1}}
\newcommand{\loc}{\mathrm{loc}}
\newcommand{\odd}{\mathrm{odd}}
\newcommand{\even}{\mathrm{even}}
\newcommand{\low}{\mathrm{low}}
\newcommand{\high}{\mathrm{high}}
\newcommand{\out}{\mathrm{out}}
\newcommand{\inn}{\mathrm{in}}
\def\vectdue#1#2{\left(\begin{matrix}#1\cr #2\end{matrix}\right)}
\def\calH{\mathcal{H}}
\def\HH{\dot{H}^1(\R^2)}
\newtheorem{theorem}{Theorem}[section]
\newtheorem{definition}[theorem]{Definition}
\newtheorem{remark}[theorem]{Remark}
\newtheorem{lemma}[theorem]{Lemma}
\newtheorem{proposition}[theorem]{Proposition}
\numberwithin{equation}{section}
\title{A variational perspective on cloaking by anomalous localized
  resonance}
\author[1]{R.V. Kohn}
\author[1,2]{J. Lu}
\author[3]{B. Schweizer}
\author[4]{M.I. Weinstein}
\affil[1]{Courant Institute of Mathematical
      Sciences, New York University, 251 Mercer Street, New York, NY
      10012, U.S.A.}
\affil[2]{Mathematics Department,
      Duke University, Box 90320, Durham, NC 27707}
\affil[3]{Technische Universit\"at Dortmund,
      Fakult\"at f\"ur Mathematik, Vogelpothsweg 87, D-44227 Dortmund,
      Germany}
\affil[4]{Department of Applied Physics
      and Applied Mathematics, Columbia University, New York, NY 10027}
\date{\small\today}
\begin{document}

\pagestyle{myheadings} \markboth{A variational perspective on cloaking
   by anomalous localized resonance}{R.V.\,Kohn, J.\,Lu, B.\,Schweizer,
   M.I.\,Weinstein}

\maketitle

\begin{abstract}
  A body of literature has developed concerning ``cloaking by
  anomalous localized resonance''. The mathematical heart of the
  matter involves the behavior of a divergence-form elliptic equation
  in the plane, $\nabla\cdot (a(x)\nabla u(x)) = f(x)$. The
  complex-valued coefficient has a matrix-shell-core geometry, with
  real part equal to $1$ in the matrix and the core, and -1 in the
  shell; one is interested in understanding the resonant behavior of
  the solution as the imaginary part of $a(x)$ decreases to zero (so
  that ellipticity is lost). Most analytical work in this area has
  relied on separation of variables, and has therefore been restricted
  to radial geometries. We introduce a new approach based on a pair of
  dual variational principles, and apply it to some non-radial
  examples. In our examples, as in the radial setting, the spatial
  location of the source $f$ plays a crucial
  role in determining whether or not resonance occurs. 
  
  \medskip
  {\bf MSC:} 35Q60, 35P05

  \medskip
  {\bf Keywords:} cloaking, anomalous localized resonance, negative
  index metamaterials
\end{abstract}

\section{Introduction}

A body of literature has developed concerning ``cloaking by anomalous
localized resonance''.  Cloaking of two types of objects has been
considered: (a) {\it dipoles} or {\it inclusions}, considered e.g. in
\cites{BrunoLintner, Nicorovici-etal1994, Milton-etal2005,
  Milton-etal2006, BouchitteSchweizer2010} and (b) spatially localized
{\it sources} \cite{Ammari-etal}.  In this article we work in the
setting of localized sources.

\begin{figure}[th]
   \centering
   \includegraphics[height=59mm]{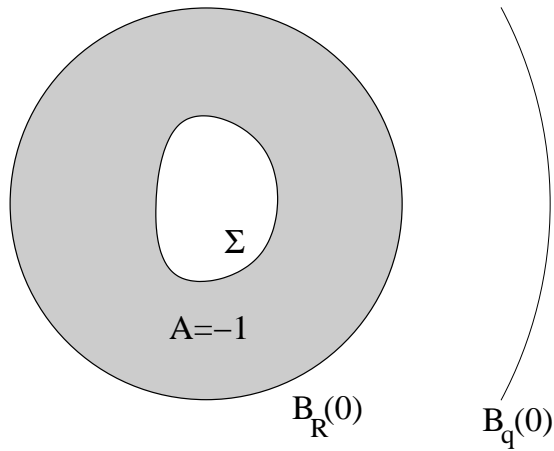}
   \caption{\em Sketch of the core-shell-matrix geometry. 
     \label{fig:ringgeom}}
\end{figure}

Focusing initially on the math (not the physics), we are interested in
a divergence-form PDE in the plane:
\begin{align}
  \label{eq:original}
  \nabla\cdot (a_\eta\nabla u_\eta) &= f\quad \text{ on } \R^2 \\
  \ \  \nabla{u}_\eta &\to 0\ \ {\rm as}\ |x|\to\infty.
  \nonumber
\end{align}
The coefficient $a_\eta(x)$ is piecewise constant and complex-valued,
with constant imaginary part $\eta > 0$:
\begin{equation}
  a_\eta (x) = A(x) + i \eta ; 
  \label{eq:a_eta}
\end{equation}
its real part has a matrix-shell-core character in the sense that 
\begin{equation}
  A(x) = \left\{ 
    \begin{array}{cl} +1 & \mbox{outside $B_R(0)$}\\
      -1 & \mbox{in the shell $B_R(0) \setminus \Sigma$}\\
      +1 & \mbox{in the core $\Sigma$}
    \end{array} \right.  
  \label{eq:A-def}
\end{equation}
(see Figure \ref{fig:ringgeom}). Concerning the core, we assume that
\begin{equation}\label{eq:structure-core}
  \Sigma \subset B_1(0)
\end{equation}
so the shell includes an annulus of width $R-1$.  Concerning the
source $f$, we assume it is real-valued, supported at distance $q$
from the origin, and has zero mean:
\begin{equation}\label{eq:structure-f-at-q}
  f = F\, \calH^1\lfloor \del B_q(0),\quad 
  F:\del B_q(0)\to \R,\ F\in L^2(\del B_q(0)), 
  \text{ and } \int_{\partial B_q(0)} F\, d \calH^1 = 0\,.
\end{equation}
Our interest lies in the question: \medskip

\noindent\textbf{Question:} \textit{As $\eta \rightarrow 0$ with $f$ and
  $A(x)$ held fixed, what is the behavior of}
\begin{equation*}
  E_\eta := \frac{\eta}{2} \int_{\R^2} |\nabla u_\eta|^2 \, dx \ ?   
\end{equation*}
\smallskip

\noindent In the radial setting, i.e. when $\Sigma = B_1(0)$, one
expects by analogy with \cites{Ammari-etal, BouchitteSchweizer2010,
  Milton-etal2006, Milton-etal2005, Nicorovici-etal1994} that the
answer depends mainly on the location of the source. Specifically:
there is a critical radius $R^* = R^{3/2}$ such that for a broad class
of sources $f$,
\begin{align*}
  \limsup_{\eta \rightarrow 0} E_\eta = \infty &  \mbox{ if $q < R^*$, while}\\ 
  \limsup_{\eta \rightarrow 0} E_\eta < \infty &  \mbox{ if $q > R^*$}. 
\end{align*}
Note that it is no restriction to fix the core radius to be $1$. A
scaling argument implies that, for core radius $r_0$, and shell radius
$R$, the critical radius is
\begin{equation*}
  R^*\, =\, r_0 \left(\frac{R}{r_0}\right)^{3/2} =
  \left(\frac{R}{r_0}\right)^{1/2}R. 
\end{equation*}

\begin{definition}[Resonance]\label {def:resonance} 
  Let a configuration be given by coefficients $A$ and source $f$ as
  in \eqref {eq:A-def}--\eqref {eq:structure-f-at-q}.  We shall call
  the configuration \underline{resonant}, if
  \begin{equation*}
    \limsup_{\eta\to 0} E_\eta\ =\ \infty\,.
  \end{equation*}
  Otherwise we call the configuration \underline{non-resonant}.
\end{definition}

In the physics literature the term ``anomalous localized resonance''
is used. An {\it anomalous} feature of the resonance is that it is not
associated to a finite dimensional eigenvalue of a linear operator and
a forcing term at or near the resonant frequency.  Instead, the
resonance here is associated to an infinite dimensional kernel of the
limiting (non-elliptic) operator. The word {\it localized} refers to
the fact that the resonance is spatially localized: while $\int
|\nabla u_\eta |^2 \rightarrow \infty$ if $q < R^*$, the potential
$u_\eta$ (and therefore also its gradient $\nabla u_\eta $) stay
uniformly bounded outside some ball.

The connection to {\it cloaking} is as follows (see \cite{Ammari-etal}
for a more thorough discussion). For time-harmonic wave propagation in
the quasistatic regime, $E_\eta$ is the rate at which energy is
dissipated to heat.  Let us now consider a source $\alpha_\eta f$,
where $\alpha_\eta \in \R$ is a scaling factor. If the (unscaled)
source $f$ produces resonance (i.e. if $E_\eta \rightarrow \infty$)
then the source $\alpha_\eta f$ is connected to the energy dissipation
$\alpha_\eta^2 E_\eta$. If the physical source $\alpha_\eta f$ has
finite power, then we must have $\alpha_\eta \rightarrow 0$ as
$\eta\to 0$. If the fields $u_\eta$ associated with the unscaled
source $f$ are bounded outside a certain region, then the physical
fields $\alpha_\eta u_\eta$ vanish in that region as $\eta\to 0$. This
implies that the finite power source $\alpha_\eta f$ is not visible
from outside.

Returning to a more mathematical perspective: we are interested in
this problem because it involves the behavior of the elliptic system
\eqref{eq:original} in a limit when ellipticity is lost. It is not
surprising that oscillatory behavior occurs in such a limit. It is
however surprising that, at least in the radial examples, (a)
resonance depends so strongly on the location of the source, and (b)
the oscillatory behavior is spatially localized. We would like to
understand the following question: \medskip

\noindent \textbf{Question:} \textit{Is this surprising behavior
  particular to the radial setting, or is it a more general
  phenomenon}?  \smallskip
  
The present paper addresses only point (a): the dependence on the
location of the source.  Our method, which is variational in
character, is unfortunately not well-suited to the study of point (b).

We know only one numerical study of a similar problem with non-radial
(and non-slab) geometry. The paper \cite{BrunoLintner} by Bruno and
Lintner considered, via numerical simulation, various examples
including an elliptical core in an elliptical shell.  The results were
similar to those of the radial case; in particular, the structure
seemed to cloak a polarizable dipole placed sufficiently near the
shell.

The paper \cite{Ammari-etal} by Ammari et al considers a problem very
similar to ours. The main difference is that both the outer and inner
edges of the shell are not constrained to be radial. (There is also a
minor difference: their PDE has $a_\eta = 1$ in the matrix and core
and $a_\eta = -1 + i\eta$ in the shell, so energy is dissipated {\it
  only} in the shell.)  Using a representation based on single layer
potentials, Ammari et al obtain an expression for a spatially
localized analog of $E_\eta$.  To make use of their expression, one
needs detailed information on the spectral properties of certain
boundary integral operators. This information is difficult to come by
in general and hence, beyond the radial setting, it is unclear how to
use their method to obtain information on resonance and non-resonance
in the limit as $\eta \rightarrow 0$.

Our approach is based on variational principles. The starting point is
a pair of (dual) variational principles for $E_\eta$. One expresses
$E_\eta$ as a minimum; trial functions may be used to provide an upper
bound in order to show that resonance doesn't occur. The dual
principle expresses $E_\eta$ as a maximum; trial functions may be used
to provide a lower bound in order to show that resonance
occurs. Similar variational principles were considered
in~\cites{CherkaevGibiansky, Milton-Seppecher-Bouchitte}.  Our main
results -- all proved using the variational principles -- are the
following:
\begin{enumerate}
\item[(i)] If there is no core then there is {\it always} resonance,
  for any source radius $q > R$ and any nonzero $f$ (see Proposition
  \ref {prop:nocore}).
\item[(ii)] For any core $\Sigma \subset B_1(0)$, there is resonance
  for a broad class of sources $f$, provided the source location is $q
  < R^* := R^{3/2}$ (see Theorem \ref {thm:inner-resonance}).
\item[(iii)] In the radial case (when $\Sigma = B_1(0)$), $R^*
  :=R^{3/2}$ is critical, in the sense that (a) when the source
  location is $q < R^*$ resonance occurs for a broad class of $f$'s,
  and (b) when the location is $q> R^*$ resonance does not occur for
  any $f$ (see Theorem \ref {thm:inner-resonance} and Proposition \ref
  {prop:spher-incl}).
\item[(iv)] In the (weakly) nonradial case when the core is
  $B_\rho(z_0)$ with $|z_0|$ sufficiently near $0$ and $\rho$
  sufficiently near $1$, resonance does not occur if the source
  location $q$ is sufficiently large (see Theorem
  \ref{thm:eccentric-localization}).
\end{enumerate}

Point (iii) is already known, from Section 5 of
\cite{Ammari-etal}. Our variational method is interesting even in this
radial setting: our proof of (iii) is, we think, simpler and more
elementary than the argument of \cite{Ammari-etal}. Unfortunately, our
methods do not seem to provide simple proofs for the localization
effect when there is resonance.

In focusing on \eqref{eq:original}, we have chosen the imaginary part
of $a_\eta$ to be the {\it same} constant constant in the matrix,
shell, and core. This simplifies the formulas, and it seems physically
unobjectionable. But we suppose a similar method could be used when
the imaginary part is different in each region.

We specifically consider sources $f$ that are concentrated on the
curve $\del B_q(0)$. Our method also allows the study of more general
distributions of sources, which can be obtained as a superpositions of
concentrated sources, $f$, at different values of $q$.

We have taken the core to have $A(x)=1$ because this case has
particular interest: in the radial setting, the ``cloaking device is
invisible'' if the core has $A=1$, see \cite
{Nicorovici-etal1994}. However anomalous localized resonance also
occurs when $A$ takes a different (constant) value in the core. It
would be interesting to extend our method to analyze cores with $A
\neq 1$.

Our assumption that $A(x)=-1$ in the shell is essential to the
phenomenon. Indeed, our PDE problem becomes very different if the
ratio across each interface, the {\it plasmonic eigenvalue}, is
different from $-1$. This can be seen from the perspective of the
boundary integral method, where ratios other than $-1$ lead to
boundary integral equations of Fredholm type, see \cites{Ammari-etal,
  Grieser:12}

Our main results are almost exclusively for a circular outer shell
boundary $\del B_R(0)$. This is essential to our method, since we use
the perfect plasmon waves on the outer shell boundary in the
construction of comparison functions. We refer to
Section~\ref{sec.general-res-loc} for a further discussion.  A more
general geometry is only treated in
Proposition~\ref{prop:holomorphic-trafo} with the help of a domain
transformation.  Related techniques are used in \cite{Nguyen:12}.

Plasmonic resonance effects have many potential applications. This is
one of the reasons why the development of {\em negative index
  metamaterials} is another much-studied research area, see e.g.
\cites{BouchitteSchweizer-Maxwell, Milton-etal2007, Pendry2004}.  We
hope that our variational approach will be useful also in the other
contexts.

\paragraph{Notation.} We use polar coordinates and write $x\in \R^2$
as $x = r\, (\cos\theta, \sin\theta)$.  In Section~\ref{sec.eccentric}
we identify $\R^2 \equiv \C$ via $(x_1,x_2) \equiv x_1 + i x_2 =
z$. With this notation, we identify $z = r e^{i\theta}$. The complex
conjugate of $z$ is denoted by $\overline{z}$. 

We denote the sphere with radius $\rho$, centered at $x_0$, as
$B_{\rho}(x_0)$. The measure $\calH^1\lfloor \del\Omega$ is the
$1$-dimensional Hausdorff measure on the curve $\del\Omega$. Unless
otherwise specified, integrals are over all of $\R^2$. Constants $C$
may change from one line to the next.

\section{The primal and dual variational principles}

In the subsequent definitions of energies we always consider the
source $f$ as a given element $f\in H^{-1}(\R^2)$. We will always
consider sources with a compact support (in the sense of
distributions).  Furthermore, we shall assume that the sources $f$
have a vanishing average,
\begin{equation*}
  \int_{\R^2}\, f = 0. 
\end{equation*}
Since $f$ is merely a distribution, it would be more correct to write
$\la f, \one\ra = 0$, where $\one : \R^2\to \R$ is the constant
function, $\one(x) = 1$ for all $x\in \R^2$.  We note that since $f$
has compact support, it can be applied to test-functions that are only
locally of class $H^1$.

We remark that, while the main results of this paper concern $\R^2$,
the primal and dual variational principles generalize to any
dimension.

\subsection{A complex elliptic system and its non-elliptic limit}

Our aim is to study, for a sequence $\eta = \eta_j \to 0$, sequences
$u_\eta$ of solutions to \eqref {eq:original}.  For non-vanishing
dissipation, $\eta\ne 0$, \eqref {eq:original} is an elliptic PDE,
while the system loses ellipticity in the limit $\eta\to0$.

To a solution $u_\eta: \R^2 \to \C$ of the original complex-valued
equation
\begin{equation}
  \label{eq:original-copy}
  \nabla\cdot (a_\eta\nabla u_\eta) = f
\end{equation}
we have associated an energy $E_\eta$ (in physical terms the energy
dissipation in the structure)
\begin{equation}
  \label{eq:complex-energy}
   E_\eta(u_\eta) := \frac{\eta}{2} \int_{\mathbb{R}^2} |\nabla u_\eta|^2\,.
\end{equation}
As noted in the introduction, the phenomenon of cloaking is related to
resonance in the sense of Definition \ref{def:resonance},
\begin{equation}
  E_\eta(u_\eta) \to \infty \label{Eblowsup}
\end{equation}
along a subsequence $\eta\searrow 0$.

We can write the complex scalar equation for $u_\eta: \R^2 \to \C$ as
a system of two real scalar equations. We set
\begin{equation}
  \label{eq:u-eta-real-imag}
  u_\eta = v_\eta + i\, \frac1{\eta}\, w_\eta,\quad 
  \text{ with } v_\eta, w_\eta: \R^2 \to \R.
\end{equation}
For a real-valued source, $f:\R^2\to \R$, the complex equation
$\nabla\cdot (a_\eta\nabla u_\eta) = f$ with $a_\eta = A + i\eta$ is
equivalent to the coupled system of two real equations on $\R^2$,
\begin{align}
  \nabla\cdot (A\nabla v_\eta) - \Delta w_\eta &= f,\label{eq:orig-real}\\
  \nabla\cdot (A\nabla w_\eta) + \eta^2 \Delta v_\eta &= 0.\label{eq:orig-imag}
\end{align}
The energy $E_\eta(u_\eta)$ can be expressed in terms of $v_\eta$ and
$w_\eta$ as
\begin{equation}
  E_\eta(u_\eta) = \frac{\eta}{2} \int |\nabla u_\eta|^2
  = \frac{\eta}{2} \int |\nabla v_\eta|^2 + \frac1{2\eta} \int |\nabla w_\eta|^2.
  \label{EI}
\end{equation}
   
In the following subsections we introduce 
\begin{enumerate}
\item the {\it primal variational problem}, a minimization problem,
  which characterizes the energy $E_\eta(u_\eta)$ as a constrained
  minimum; and
\item the {\it dual variational problem}, a maximization problem,
  which characterizes the energy $E_\eta(u_\eta)$ as a constrained
  maximum.
\end{enumerate}

To provide a functional analytic framework for the study of the
variational problems we introduce the following function space of real
or complex-valued functions,
\begin{equation}
  \label{eq:Hdot}
  \HH := \left\{ U\in L^2_\loc(\R^2)\left| 
      \nabla U\in L^2(\R^2)\right.\right\},\quad
  \| U \|_{\HH}^2 := \int_{\R^2} |\nabla U|^2 + \int_{B_1(0)} |U|^2\,.
\end{equation}

\subsection{The primal variational problem}

For fixed $f\in H^{-1}(\R^2)$ we consider the energy functional
\begin{equation}\label{Idef}
  I_\eta(v,w) := \frac{\eta}{2} \int
  |\nabla v|^2 + \frac1{2\eta} \int |\nabla w|^2\,
\end{equation}
defined for $v,w \in\HH$. The primal variational problem is given by
\begin{equation}
  \label{eq:primal}
  \begin{split}
    &{\rm minimize}\ I_\eta(\tilde v,\tilde w)
    \textrm{ over all pairs}\ (\tilde v,\tilde w)\\
    &\text{which satisfy the PDE constraint } \nabla\cdot (A\nabla
    \tilde v) - \Delta \tilde w = f\,.
  \end{split}
\end{equation}

\begin{lemma}\label{lem:primal}
  Let $f\in H^{-1}(\R^2)$ be a fixed real-valued source with compact
  support and with vanishing average. Then the primal variational
  problem \eqref {eq:primal} is equivalent to the original problem
  \eqref {eq:original-copy} with energy \eqref {eq:complex-energy} in
  the following sense.
  \begin{enumerate}
  \item The infimum
    \begin{equation}
      \inf \left\{\ I_\eta(\tilde v,\tilde w)\ 
        \middle\vert \ (\tilde v,\tilde w)\in\HH\times
        \HH,\ \ \nabla\cdot (A\nabla \tilde v) - \Delta \tilde w =
        f  \right\}
      \label{infimum}\end{equation}
    is attained at a pair $(v_\eta,w_\eta)\in\HH\times\HH$.
  \item The minimizing pair, $(v_\eta,w_\eta)$, is unique up to an
    additive constant. The function $u_\eta :=
    v_\eta+i\eta^{-1}w_\eta$ is the unique (up to an additive
    constant) solution of the original problem \eqref
    {eq:original-copy}.
  \item For the solutions, the energies coincide,
    \begin{equation}
      \label{eq:orig-primal}
      E_\eta(u_\eta) = I_\eta(v_\eta,w_\eta)\,.
    \end{equation}
  \end{enumerate}
\end{lemma}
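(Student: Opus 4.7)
The plan is to treat \eqref{eq:primal} as a strictly convex quadratic optimization problem with a linear PDE constraint, recover the original complex equation as its Euler--Lagrange system, and then read off the energy identity from the algebraic decomposition \eqref{EI}.

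\textbf{Existence and uniqueness (item 1 and the uniqueness half of item 2).} I would apply the direct method. The constraint set is non-empty: with $\tilde v \equiv 0$, the Newtonian potential $\tilde w := -\tfrac{1}{2\pi}\log|\cdot|*f$ is a valid element of $\HH$, since the zero-mean and compact-support hypotheses on $f$ kill the log tail, giving $\tilde w(x) = O(|x|^{-1})$ and $\nabla\tilde w(x) = O(|x|^{-2})$ at infinity. For a minimizing sequence $(v_n,w_n)$ the bound on $I_\eta$ controls $\|\nabla v_n\|_{L^2}$ and $\|\nabla w_n\|_{L^2}$; subtracting the $B_1(0)$-averages (which changes neither $I_\eta$ nor the constraint) and applying Poincaré on $B_1(0)$ gives boundedness in $\HH\times\HH$. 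A weak limit $(v_\eta,w_\eta)$ satisfies the linear constraint in the limit and attains the infimum by weak lower semicontinuity of the convex functional $I_\eta$. Strict convexity of $I_\eta$ in the gradient variables then yields uniqueness up to additive constants.

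\textbf{Euler--Lagrange equations (remainder of item 2) and energy identity (item 3).} The admissible variations are pairs $(\delta v,\delta w)\in\HH\times\HH$ with $\nabla\cdot(A\nabla\delta v) - \Delta\delta w = 0$, and stationarity reads
\begin{equation*}
\eta\int\nabla v_\eta\cdot\nabla\delta v + \tfrac{1}{\eta}\int\nabla w_\eta\cdot\nabla\delta w = 0.
\end{equation*}
I would promote this to a pointwise PDE via a Lagrange multiplier $\lambda$, using that the linear map $(v,w)\mapsto\nabla\cdot(A\nabla v) - \Delta w$ is surjective onto the compactly-supported, zero-mean distributions (the same Newtonian-potential construction, applied on the $w$-slot). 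The stationarity in $w$ of $I_\eta - \langle\lambda,\nabla\cdot(A\nabla v) - \Delta w - f\rangle$ gives $\Delta(\lambda - w_\eta/\eta) = 0$, and Liouville on $\R^2$ for harmonic functions with $L^2$-gradient forces $\lambda = w_\eta/\eta$ modulo a constant. Substituting into the stationarity in $v$ produces $\nabla\cdot(A\nabla w_\eta) + \eta^2\Delta v_\eta = 0$, which is precisely \eqref{eq:orig-imag}. Combined with the constraint \eqref{eq:orig-real}, the pair $(v_\eta,w_\eta)$ is then the real/imaginary decomposition of a solution to $\nabla\cdot(a_\eta\nabla u_\eta)=f$ for $u_\eta := v_\eta + i\eta^{-1}w_\eta$; uniqueness of $u_\eta$ modulo constants transfers from uniqueness of the minimizer, completing item 2. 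For item 3, the pointwise identity $|\nabla u_\eta|^2 = |\nabla v_\eta|^2 + \eta^{-2}|\nabla w_\eta|^2$, already recorded in \eqref{EI}, integrates to $E_\eta(u_\eta) = I_\eta(v_\eta,w_\eta)$.

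\textbf{Main obstacle.} The expected difficulty is the function-space bookkeeping on the whole plane, not the algebra. $I_\eta$ is only a seminorm on $\HH\times\HH$, forcing the quotient-by-constants plus Poincaré step; and the Lagrange-multiplier argument requires integrations by parts against test functions that lie merely in $\HH$, not $C_c^\infty$, with the discontinuous coefficient $A$. The enabling facts are that $\HH$-functions on $\R^2$ grow at most logarithmically (a 2D Sobolev/BMO bound), whereas Newtonian potentials of mean-zero compactly-supported data and their gradients decay algebraically, so the cutoff boundary terms vanish as the cutoff is removed; and that the loss of ellipticity of $\nabla\cdot(A\nabla\cdot)$ never actually enters, because the constraint $\nabla\cdot(A\nabla v) - \Delta w = f$ is always solvable in the $w$-slot with $v=0$.
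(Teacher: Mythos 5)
Your proof is correct and follows the same skeleton as the paper (direct method for existence, Euler--Lagrange for the correspondence with the complex PDE, the algebraic identity \eqref{EI} for the energy equality), but two steps are carried out differently, and one merits a caution.

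For the Euler--Lagrange equations, you introduce a Lagrange multiplier $\lambda$ and identify it with $w_\eta/\eta$ via Liouville; the paper instead works directly with constrained variations $(\tilde v,\tilde w)$ satisfying $\nabla\cdot(A\nabla\tilde v)-\Delta\tilde w=0$, uses the constraint to rewrite $\int\nabla w_\eta\cdot\nabla\tilde w$ as $\int\nabla w_\eta\cdot A\nabla\tilde v$, and reads off the weak form of \eqref{eq:orig-imag} with $\tilde v$ free. The two routes produce the same equation, but the paper's substitution sidesteps the need to justify the existence of a multiplier in an infinite-dimensional setting with a non-closed constraint operator image; your route needs the closed-range/surjectivity observation you flag, and then still the Liouville step. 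For uniqueness, you invoke strict convexity of $I_\eta$ in the gradient variables, which indeed pins down the minimizer up to constants; the paper instead proves well-posedness of the original complex equation \eqref{eq:original-copy} by Lax--Milgram (coercivity of the sesquilinear form $-i\int a_\eta\nabla\tilde u_1\overline{\nabla\tilde u_2}$ on the quotient space, using $\Im a_\eta=\eta>0$ and Poincar\'e). The Lax--Milgram step buys something you treat a little lightly: to assert that $u_\eta:=v_\eta+i\eta^{-1}w_\eta$ is the \emph{unique} solution of the complex PDE, one must rule out solutions of \eqref{eq:original-copy} that do not arise as minimizers. You say uniqueness ``transfers'' from the minimizer; this does close up because every solution of \eqref{eq:orig-real}--\eqref{eq:orig-imag} is a critical point of the convex functional on the affine constraint set, hence a global minimizer, but you should say so explicitly. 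Your feasibility argument (set $\tilde v\equiv0$ and take $\tilde w$ to be the Newtonian potential of $f$) is a special case of the paper's (general smooth compactly supported $\tilde v_*$ plus Poisson solve for $\tilde w_*$) and is perfectly fine given the zero-mean assumption on $f$.
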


\noindent\emph{Remark}. The lemma implies
\begin{equation}
  \label{eq:lem-primal-conseq}
  E_\eta(u_\eta) \le I_\eta(\tilde v,\tilde w)
\end{equation}
for {\em every} pair $(\tilde v,\tilde w)$ that satisfies the PDE
constraint of \eqref {eq:primal}. We shall use the inequality
\eqref{eq:lem-primal-conseq} to establish non-resonance.

\begin{proof}
  
  \noindent\emph{Point 1.} Fix a radius $s>0$ such that
  $\supp(f) \subset B_s(0)$, we introduce the function space with
  constraint:
  \begin{equation*}
    X :=
    \left\{ \tilde u  \in  \HH\ \middle\vert\ \int_{B_s(0)} \tilde u = 0
    \right\}.
  \end{equation*}
  Note that $I_\eta$, defined in \eqref{Idef}, is convex on $X\times
  X$. Moreover, the constraint set is non-empty. Indeed, choose
  $\tilde v_*$ smooth and of compact support and defined $\tilde w_*$
  to be the weak solution of $\Delta \tilde
  w_*=-f+\nabla\cdot\left(A\nabla \tilde v_*\right)$. It follows that
  the infimum in \eqref{infimum} is attained on $X\times X$ (see,
  e.g., Chapter 8.2 of \cite{Evans}), {\it i.e.} there exists
  $(v_\eta,w_\eta)\in X\times X$ such that
  \[ I_\eta(v_\eta, w_\eta) \le I_\eta(\tilde v,\tilde w)\ \textrm{for
    all}\ (\tilde v,\tilde w)\in X\times X ,\ {\rm with }\
  \nabla\cdot (A\nabla \tilde v) - \Delta \tilde w = f\,.\] 
  
  \smallskip
  \noindent\emph{Point 2.} We first observe that the 
  constraint \eqref {eq:primal} is identical to \eqref {eq:orig-real}.
  As a minimizer of $I_\eta$, the pair $(v_\eta, w_\eta)$ satisfies
  the Euler-Lagrange equation
  \[ \partial_\tau I_\eta\Big(v_\eta+\tau\tilde v, w_\eta+\tau\tilde
  w\Big)\Big|_{\tau=0}\ =\ 0,\text{ for every } (\tilde v,\tilde w)\in
  X\times X\] satisfying $\nabla\cdot (A\nabla \tilde v) - \Delta
  \tilde w = 0$.  For the energy $I_\eta$, this equation reads
  \begin{align*} 
    0 =  \eta \int \nabla v_\eta\cdot \nabla\tilde v + \frac1{\eta} \int
    \nabla w_\eta\cdot \nabla \tilde w = \eta \int \nabla v_\eta\cdot
    \nabla \tilde v + \frac1{\eta} \int \nabla w_\eta\cdot A \nabla
    \tilde v,
  \end{align*}
  where we have used the constraint to obtain the second equality. We
  find
  \begin{align*}
    - \frac1{\eta}\, \Big\la \eta^2 \Delta v_\eta + \nabla\cdot
    (A\nabla w_\eta) , \tilde v \Big\ra = 0,\ \ \forall\,\tilde v\in
    X\ ,
  \end{align*}
  which is the weak form of \eqref {eq:orig-imag}. We use here that
  $\tilde v$ can be any element of $\HH$ with compact support, since
  an associated $\tilde w\in \HH$ can be obtained as the solution of a
  Poisson problem.  As a solution of \eqref
  {eq:orig-real}-\eqref{eq:orig-imag}, the pair $(v_\eta,w_\eta)$
  defines through $u_\eta\equiv v_\eta+i\eta^{-1}w_\eta$ a solution of
  the original problem \eqref {eq:original-copy}.

  The uniqueness is a consequence of the fact that the original
  problem \eqref {eq:original-copy} possesses a unique solution. This
  can be seen from the Lax-Milgram Lemma.  We introduce a sesquilinear
  form $b(\cdot,\cdot):X \times X \to\C$ defined by
  \begin{align*}
    b(\tilde u_1, \tilde u_2) := -i \int_{\R^2} a_\eta \nabla \tilde
    u_1 \overline{\nabla\tilde u_2}\,.
  \end{align*}
  The form $b(\cdot,\cdot)$ is coercive on $X$
  \[ \Re b(\tilde u, \tilde u) \ge\ C\|\tilde u\|_X^2\ ,\] since the
  imaginary part of $a_\eta$ is strictly positive and by the
  Poincar\'e inequality. Existence and uniqueness of a weak solution
  $u_\eta\in \HH$ solution follows from the Lax-Milgram Lemma.

  \smallskip

  \noindent\emph{Point 3.} 
  The energy equality \eqref {eq:orig-primal} was already observed in
  \eqref{EI}. 

  The proof of Lemma \ref{lem:primal} is complete.
\end{proof}

\subsection{The dual variational problem}

For fixed $f\in H^{-1}(\R^2)$, we introduce the dual energy
\begin{equation}
  J_\eta(v,\psi) := \int f \psi 
  - \frac{\eta}{2} \int |\nabla v|^2 - \frac{\eta}{2} \int |\nabla \psi|^2\ ,
  \label{Jdef}
\end{equation}
defined for $(v,\psi)\in\HH$. The dual variational problem is given by
\begin{equation}
  \label{eq:dual}
  \begin{split}
    &\textrm{maximize}\ J_\eta(\tilde v,\tilde \psi)\ 
    \textrm{over all pairs}\  (\tilde v,\tilde \psi)\\
    &\text{which satisfy the PDE constraint } \nabla\cdot (A\nabla
    \tilde\psi) + \eta\Delta \tilde v = 0.
  \end{split}
\end{equation}

The following lemma establishes that the dual variational problem is
also equivalent to the original complex equation.
 
\begin{lemma}\label{lem:dual}
  Let $f\in H^{-1}(\R^2)$ be a fixed real-valued source with compact
  support and with vanishing average. Then the dual variational
  problem \eqref {eq:dual} is equivalent to the original problem
  \eqref {eq:original-copy} with energy \eqref {eq:complex-energy} in
  the following sense.  
  \begin{enumerate}
  \item The supremum 
    \begin{equation} \label{supremum} \sup \left\{\ J_\eta(\tilde
        v,\tilde \psi)\ \middle\vert \ (\tilde v,\tilde\psi)
        \in\HH\times\HH,\ \ \nabla\cdot (A\nabla \tilde \psi) + \eta
        \Delta \tilde v = 0 \right\}
    \end{equation}
    is attained at a pair $(v_\eta,\psi_\eta)\in\HH\times\HH$.
  \item The maximizing pair, $(v_\eta,\psi_\eta)$, is unique up to an
    additive constant. The function $u_\eta := v_\eta + i \psi_\eta$
    is the unique (up to constants) solution of the original problem
    \eqref {eq:original-copy}.
  \item For the solutions, the energies coincide,
    \begin{equation} \label{eq:orig-dual}
      E_\eta(u_\eta) = J_\eta(v_\eta,\psi_\eta)\,.
    \end{equation}
  \end{enumerate}
\end{lemma}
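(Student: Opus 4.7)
\emph{Approach.} The plan is to sidestep a separate existence argument for the dual problem by using Lemma~\ref{lem:primal} to produce an explicit candidate maximizer. Writing the primal solution as $u_\eta = v_\eta + i\eta^{-1}w_\eta$ and setting $\psi_\eta := \eta^{-1}w_\eta$, the imaginary-part equation~\eqref{eq:orig-imag} divided by $\eta$ becomes exactly the dual constraint $\nabla\cdot(A\nabla\psi_\eta)+\eta\Delta v_\eta = 0$. So $(v_\eta,\psi_\eta)\in\HH\times\HH$ is automatically admissible, and the entire task reduces to showing that this pair is the maximizer, that the maximum value is $E_\eta(u_\eta)$, and that it is unique up to additive constants.

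\emph{Main steps.} First I would compute $J_\eta(v_\eta,\psi_\eta)$: testing the real-part equation \eqref{eq:orig-real} against $\psi_\eta$ and the imaginary-part equation \eqref{eq:orig-imag} against $v_\eta$ gives $\int f\psi_\eta = \eta\int|\nabla v_\eta|^2 + \eta\int|\nabla\psi_\eta|^2 = 2E_\eta(u_\eta)$, from which the energy identity $J_\eta(v_\eta,\psi_\eta) = E_\eta(u_\eta)$ is immediate (statement 3). Second, for any admissible competitor $(\tilde v,\tilde\psi)$, I would set $\delta v := \tilde v - v_\eta$ and $\delta\psi := \tilde\psi - \psi_\eta$; the pair $(\delta v,\delta\psi)$ then satisfies the \emph{homogeneous} dual constraint. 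Expanding $J_\eta(\tilde v,\tilde\psi)$ around $(v_\eta,\psi_\eta)$ and using the two optimality equations above to test the linear cross-terms, I expect the entire first-order variation to collapse, leaving the clean identity
\begin{equation*}
  J_\eta(\tilde v,\tilde\psi) \;=\; E_\eta(u_\eta) \;-\; \frac{\eta}{2}\int |\nabla\delta v|^2 \;-\; \frac{\eta}{2}\int |\nabla\delta\psi|^2.
\end{equation*}
This single identity delivers everything: statement 1 (the supremum is attained at $(v_\eta,\psi_\eta)$ and equals $E_\eta(u_\eta)$), statement 2 (equality forces $\nabla\delta v = \nabla\delta\psi = 0$, hence uniqueness up to additive constants, which is consistent with the fact that $\int f\tilde\psi$ is unchanged by constant shifts thanks to the zero-mean hypothesis on $f$), and re-confirms statement 3.

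\emph{Main obstacle.} The substantive difficulty is justifying the integrations by parts rigorously in the $\HH$ framework: elements of $\HH$ have gradients in $L^2(\R^2)$ but need not decay at infinity, so the boundary terms at infinity need to be controlled. The compact support of $f$ makes $\int f\tilde\psi$ well-defined via the local pairing between $H^{-1}$ and $L^2_{\loc}$, but to handle terms like $\int A\nabla v_\eta\cdot\nabla\delta\psi$ I would multiply test functions by a smooth radial cutoff $\chi_n$ supported in $B_{2n}(0)$ and equal to $1$ in $B_n(0)$, then send $n\to\infty$, using $\nabla v_\eta,\nabla\psi_\eta\in L^2(\R^2)$ and a Cauchy--Schwarz estimate on the annular region where $\nabla\chi_n$ is supported to show the error terms vanish. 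Once this analytic bookkeeping is in place, the rest of the proof is an essentially formal convex-duality calculation parallel to that of Lemma~\ref{lem:primal}.
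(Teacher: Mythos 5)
Your proposal is correct, and it takes a genuinely different route from the paper. The paper proves Lemma~\ref{lem:dual} ``from scratch'': existence of a maximizer via concavity and the direct method (mirroring the argument for Lemma~\ref{lem:primal}), then an Euler--Lagrange computation in which the constraint is used to eliminate $\tilde v$, recovering the weak form of \eqref{eq:orig-real} and identifying the maximizer with the PDE solution, and finally a separate energy bookkeeping step for point~3. You instead \emph{bootstrap} off Lemma~\ref{lem:primal}: take the already-constructed primal solution $(v_\eta,w_\eta)$, set $\psi_\eta = \eta^{-1}w_\eta$, observe that \eqref{eq:orig-imag} divided by $\eta$ is precisely the dual constraint, and then prove optimality directly via the exact quadratic expansion
\begin{equation*}
  J_\eta(\tilde v,\tilde\psi) = E_\eta(u_\eta) - \tfrac{\eta}{2}\int|\nabla\delta v|^2 - \tfrac{\eta}{2}\int|\nabla\delta\psi|^2,
\end{equation*}
which is exact because $J_\eta$ is a concave quadratic and the linear terms collapse by testing \eqref{eq:orig-real} against $\delta\psi$ and the homogeneous constraint against $v_\eta$. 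I checked these cancellations and they are correct. The single identity delivers attainment, the value $E_\eta(u_\eta)$, and uniqueness up to constants all at once, which is cleaner than the paper's three-part argument; the trade-off is that your proof is not self-contained (it needs Lemma~\ref{lem:primal} first, which the paper's does not), and it does not independently re-verify well-posedness of the dual problem. You also correctly flag the one genuine analytic subtlety --- justifying the integrations by parts when elements of $\HH$ can grow logarithmically at infinity --- and the cutoff-plus-Cauchy--Schwarz-on-the-annulus argument you sketch is the standard remedy; the paper itself leaves this implicit in both Lemma~\ref{lem:primal} and Lemma~\ref{lem:dual}, so you are not missing anything the paper supplies.
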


\noindent\emph{Remark}. The lemma implies
\begin{equation}
  \label{eq:lem-dual-conseq}
  E_\eta(u_\eta) \ge J_\eta(\tilde v,\tilde \psi)
\end{equation}
for {\em every} pair $(\tilde v,\tilde \psi)$, which satisfies the PDE
constraint of \eqref{eq:dual}.  We shall use inequality
\eqref{eq:lem-dual-conseq} to establish our results on resonance.

\begin{proof}
  \noindent\emph{Point 1.}
  The existence of a maximizing pair $(v_\eta,\psi_\eta)$ for the
  variational problem \eqref{supremum} follows from the concavity of
  $J_\eta$ (convexity of $-J_\eta$), by arguments analogous to those
  given above for the primal variational problem.
  
  \smallskip
  \noindent\emph{Point 2.}
  At a maximizer,
  $(v_\eta,\psi_\eta)$, one has for all $\tilde v, \tilde \psi \in \HH
  \cap L^2(\R^2)$ satisfying the PDE constraint $\nabla\cdot (A\nabla
  \tilde\psi) + \eta\Delta \tilde v = 0$, that
  \begin{equation*}
    \partial_\tau J_\eta\Big(v_\eta+\tau \tilde v,\psi_\eta + \tau
    \tilde\psi\Big)\Big|_{\tau=0}\ =\ 0.
  \end{equation*}
  For the energy $J_\eta$, this relation provides
  \begin{align}
    \int f \tilde\psi - \eta \int \nabla v_\eta\cdot \nabla\tilde v -
    \eta \int \nabla \psi_\eta\cdot \nabla \tilde \psi\ =\
    0. \label{EL1}
  \end{align}
  Using the PDE constraint $\nabla\cdot (A\nabla \tilde \psi) + \eta
  \Delta \tilde v = 0$ to replace $\tilde v$, we find that \eqref{EL1}
  is equivalent to
  \begin{align*}
    0 &= \int f \tilde\psi + \int \nabla v_\eta\cdot A\nabla \tilde\psi
    - \eta \int \nabla \psi_\eta\cdot \nabla \tilde \psi\\
    &= \Big\la f - \nabla\cdot (A\nabla v_\eta) + \eta \Delta
    \psi_\eta, \tilde\psi \Big\ra \,.
  \end{align*}
  We conclude that the pair $(v_\eta,w_\eta) :=
  (v_\eta,\eta\psi_\eta)$ is a weak solution of \eqref
  {eq:orig-real}-\eqref{eq:orig-imag} and, thus, that $u_\eta :=
  v_\eta+ i \psi_\eta$ is a solution of $\nabla\cdot\left(a_\eta\nabla
    u_\eta\right) = f$ on $\R^2$.  Uniqueness follows again from the
  fact that $u_\eta$ is unique up to constants.

  \smallskip
  \noindent\emph{Point 3.}
  Regarding the energy equality, we calculate
  \begin{align*}
    E_\eta(u_\eta) - J_\eta(v_\eta,\psi_\eta) &= \frac{\eta}{2} \int
    |\nabla u_\eta|^2 - J_\eta(v_\eta,\psi_\eta)
    = \eta \int |\nabla v_\eta|^2 + \eta \int |\nabla \psi_\eta|^2 - \int f \psi_\eta\\
    &= - \int \nabla v_\eta\cdot A\nabla \psi_\eta - \Big\la \psi_\eta,
    -f + \nabla\cdot (A\nabla v_\eta)\Big\ra- \int f \psi_\eta = 0.
  \end{align*}
  This concludes the proof of Lemma \ref{lem:dual}.
\end{proof}

\section{Resonance results}
\label{sec.general-res-loc}

As discussed in the introduction, we consider configurations of the
following type:
\begin{enumerate}
\item The coefficients $a_\eta(x)$ and $A(x)$ are defined by
  \eqref{eq:a_eta}-\eqref{eq:A-def} with core $\Sigma\subset B_1(0)$
\item The source, $f(x)$, is concentrated at a distance $q>0$ from the
  origin and is taken of the form $f = F\, \calH^1\lfloor \del B_q(0)$
  as in \eqref {eq:structure-f-at-q}.
\end{enumerate}
We seek conditions on configurations, which ensure resonance or
non-resonance in the sense of Definition \ref{def:resonance}.

We explore the resonance properties of a configuration as follows. To
prove resonance we use the dual variational principle, exploiting
\eqref {eq:lem-dual-conseq}.  It suffices to construct, given $\eta =
\eta_j\to 0$, a sequence of comparison functions $(v_\eta, \psi_\eta)$
that satisfy the constraint of \eqref {eq:dual} and that have
unbounded energies $J_\eta(v_\eta, \psi_\eta)$.  To prove
non-resonance we use the primal variational principle, exploiting
\eqref {eq:lem-primal-conseq}.  It suffices to construct, given $\eta
= \eta_j\to 0$, a sequence of comparison functions $(v_\eta, w_\eta)$
that satisfy the constraint of \eqref {eq:primal} which have bounded
energies $I_\eta(v_\eta, w_\eta)$.

In this section, we show resonance in both radial and non-radial
settings. The non-resonance results will be presented in
Section~\ref{sec.nonres.radial} for radial cases and
Section~\ref{sec.eccentric} for a non-radial geometry.

\smallskip The basis of construction of trial functions  is the family
of {\it perfect plasmon waves}:

\begin{remark}\label{rem:perfect-plasmon}
  Consider the PDE for functions $\psi : \R^2\to \R$,
  \begin{align}
    &\nabla\cdot\left(A\nabla \psi\right)\ =\ 0,\nonumber\\
    &\nabla \psi(x)\ \to\ 0,\ \ {\rm as}\ \ |x|\to\infty
  \end{align}
  where
  \begin{equation}
    A(x) = \left\{ 
      \begin{array}{cl} 
        -1\quad & |x|\le R\\
        +1\quad & |x|> R
      \end{array} \right.   .
    \label{coreless}\end{equation}
  For any $k\ge1$, there is a non-trivial solution $\psi = \hat\psi_k$
  which achieves its maximum at a point with $|x|=R$, given by:
  \begin{equation}
    \label{eq:choice-psi-emptyset}
    \hat\psi_k(x) := 
    \begin{cases}
      r^k \cos(k\theta)\quad &\text{ for } |x|<R,\\
      r^{-k} R^{2k} \cos(k\theta)\quad &\text{ for } |x|\ge R .
    \end{cases}
  \end{equation}
  We call such functions {\it perfect plasmon waves}. Notice that
  \begin{align*}
    \int |\nabla \hat\psi_k|^2 = \int_{|x|=R} \hat \psi_k
    \left[\frac{\partial \hat\psi_k}{\partial r} \right] = 2 \pi k
    R^{2k}\, .
  \end{align*}
\end{remark}

Since our proofs rely on these perfect plasmon waves, we always use
(except for Proposition \ref {prop:holomorphic-trafo}) the circular
outer shell boundary $\del B_R(0)$. For the same reason, our technique
is restricted to the two-dimensional setting. (For an explanation why
it does not extend to 3 or more dimensions, see Appendix
\ref{perfectplasmons?}).

Since perfect plasmon waves are given in terms of Fourier harmonics, it is
natural to expand arbitrary sources $F\in L^2(\del B_q(0))$ with
vanishing average in Fourier series. We will represent an arbitrary
source by superposition of the elementary sources parametrized by
harmonic index, $k\in \N$, and source-distance, $q$:
\begin{equation}
  \label{eq:form-f-special}
  f_k^q = \cos(k\theta)\, \calH^1\lfloor \del B_q(0).
\end{equation}

\subsection{Resonance in the radial case }

\begin{proposition}[No core $\implies$ Resonance for sources at any
  distance $q$]\label{prop:nocore}
  Assume no core, $\Sigma=\emptyset$, so that $a_\eta(x) = A(x)+i\eta$
  where $A(x)$ is given by \eqref{coreless}.  Let $f=F\calH^1\lfloor
  \del B_q(0)$ with $0 \neq F:\del B_q(0)\to\mathbb{R}$ be a source at
  a distance $q > R$. Then the configuration is resonant,
  \textit{i.e.}  $E_{\eta}(u_{\eta}) \to \infty$ as $\eta \to 0$.
\end{proposition}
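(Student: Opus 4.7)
The plan is to apply the dual variational principle of Lemma~\ref{lem:dual}, using a suitably rescaled perfect plasmon wave $\hat\psi_k$ from Remark~\ref{rem:perfect-plasmon} as trial function. The idea is that in the no-core case these waves live on all of $\R^2$, they exactly satisfy $\nabla\cdot(A\nabla\hat\psi_k)=0$, and they decay slowly enough outside $B_R(0)$ that their evaluation at distance $q>R$ remains effective.

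\textbf{Step 1 (Fourier selection).} Expand $F\in L^2(\partial B_q(0))$ as a Fourier series $F(\theta)=\sum_{k\ge1}\bigl(a_k\cos(k\theta)+b_k\sin(k\theta)\bigr)$; the zero-mean hypothesis removes the $k=0$ term. Since $F\not\equiv 0$, at least one $a_k$ or $b_k$ is nonzero at some $k\ge1$. Without loss of generality assume $a_k\ne 0$; the sine case is identical using $r^k\sin(k\theta)$ as the interior plasmon profile.

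\textbf{Step 2 (Trial functions).} Set $v_\eta\equiv 0$ and $\psi_\eta:=c_\eta\,\hat\psi_k$ for a scalar $c_\eta>0$ chosen below. Because $\hat\psi_k$ and $A\partial_r\hat\psi_k$ are continuous across $\{|x|=R\}$ (a routine check from \eqref{eq:choice-psi-emptyset}), $\nabla\cdot(A\nabla\hat\psi_k)=0$ weakly on $\R^2$. The decay $|\nabla\hat\psi_k|\lesssim r^{-k-1}$ for $k\ge 1$ ensures $\hat\psi_k\in\HH$. Since $v_\eta=0$ is harmonic, the PDE constraint $\nabla\cdot(A\nabla\psi_\eta)+\eta\Delta v_\eta=0$ of \eqref{eq:dual} holds.

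\textbf{Step 3 (Compute and optimize $J_\eta$).} On $\partial B_q(0)$, with $q>R$, one has $\hat\psi_k=q^{-k}R^{2k}\cos(k\theta)$. Using orthogonality of Fourier modes on $\partial B_q(0)$ and the integral identity for $\int|\nabla\hat\psi_k|^2$ recorded in Remark~\ref{rem:perfect-plasmon},
\begin{equation*}
J_\eta(0,c_\eta\hat\psi_k)=c_\eta\,\pi a_k\,q^{1-k}R^{2k}-c_\eta^{\,2}\,\pi\eta k R^{2k}.
\end{equation*}
Optimizing in $c_\eta$ (take $c_\eta=a_k q^{1-k}/(2\eta k)$) yields
\begin{equation*}
\sup_{c_\eta}J_\eta(0,c_\eta\hat\psi_k)=\frac{\pi a_k^{\,2}\,q^{2-2k}R^{2k}}{4\eta k}\ \xrightarrow[\eta\to 0]{}\ \infty.
\end{equation*}
By \eqref{eq:lem-dual-conseq}, $E_\eta(u_\eta)$ is bounded below by this quantity, so $E_\eta(u_\eta)\to\infty$, proving resonance.

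I do not foresee a serious obstacle: the construction is explicit, the PDE constraint is automatic for perfect plasmon waves, and the source pairing is a one-line Fourier computation. The only delicate points are verifying (i) the interface matching of $\hat\psi_k$ across $\partial B_R(0)$ so that it is a weak solution globally, and (ii) that $\hat\psi_k\in\HH$, both of which reduce to inspecting \eqref{eq:choice-psi-emptyset}. The key mechanism is that without a core there is no geometric mismatch to generate a ``critical radius'': the plasmon wave $\hat\psi_k$ persists as a global harmonic-away-from-$\{|x|=R\}$ profile whose external value at $\partial B_q(0)$ scales like $q^{-k}R^{2k}$, and the quadratic-over-linear trade-off in $c_\eta$ always produces a $1/\eta$ blowup irrespective of $q>R$.
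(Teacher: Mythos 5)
Your proof is correct and follows essentially the same route as the paper: the dual variational principle with $v_\eta\equiv 0$, a rescaled perfect plasmon wave $\psi_\eta=c_\eta\hat\psi_k$ tuned to a nonzero Fourier mode of $F$, and the one-line balance of the linear source term against the quadratic dissipation term. The paper makes the slightly softer choice $\lambda_\eta\to\infty$ with $\eta\lambda_\eta^2\to 0$, whereas you optimize $c_\eta$ exactly, but the two are equivalent; your explicit handling of the sine case is a small extra point of care.
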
 

\begin{proof}
  We fix the radii $R$ and $q$ and consider an arbitrary sequence
  $\eta = \eta_j \to 0$.  We write the source as $f =
  \sum_{k=1}^\infty \alpha_k f^q_k$, where $f^q_k$ is defined in
  \eqref {eq:form-f-special}.  Since $F\neq 0$, there exists some
  $k\ge1$, such that $\alpha_k \neq 0$.
  Our aim is to find a sequence $(v_\eta, \psi_\eta)$, satisfying the
  constraint $\nabla\cdot (A\nabla \psi_\eta) + \eta\Delta v_\eta = 0$
  of \eqref {eq:dual} and such that $J_\eta(v_\eta,\psi_\eta) \to
  \infty$. We choose
  \begin{align}
    v_\eta(x)\ &\equiv\ 0\\
    \psi_\eta(x) &:= \lambda_\eta \hat\psi_{k}(x)\,,
  \end{align}
  where $\hat\psi_{k}$ is the perfect plasmon wave of
  \eqref{eq:choice-psi-emptyset} and $\lambda_\eta\in \R$ is to be
  chosen below. The pair $(v_\eta,\psi_\eta)$ satisfies the constraint
  \eqref {eq:dual}. Using \eqref {eq:lem-dual-conseq}, the definition
  of $J_\eta$, the hypothesis $q>R$, and the orthogonality of Fourier
  harmonics, we obtain
  \begin{align*}
    E_\eta(u_\eta) &\ge J_\eta(v_\eta,\psi_\eta) =
    J_\eta(0,\psi_\eta)
    = \int f\cdot \psi_\eta - \frac{\eta}{2} \int |\nabla \psi_\eta|^2\\
    &= \int_{\del B_q(0)} \alpha_{k} \cos(k\theta)\cdot \lambda_\eta
    q^{-k} R^{2k} \cos(k\theta)
    -  \frac{\eta}{2} |\lambda_\eta|^2 \int |\nabla \hat\psi_k|^2\\
    &\ge 
    \pi\, q \alpha_k \lambda_\eta\, q^{-k} R^{2k}
    - C_0\,  \left(\eta  |\lambda_\eta|^2\right)\, k R^{2k}\,.
  \end{align*} 
  Choosing $\lambda_\eta \to\infty$ with $\eta |\lambda_\eta|^2\to 0$
  we obtain $E_\eta(u_\eta) \to \infty$ for $\eta\to 0$.
\end{proof}

\subsection{Resonance in the non-radial case}

Our first observation for non-radial geometry regards a variant of
Proposition \ref {prop:nocore}. We consider the index $A = -1$ in a
domain $D\subset \R^2$, which is similar, but not identical to the
ball $B_R(0)$.  

Let $\Phi : \C\to\C$ be a holomorphic function. For three radii $R < q
< s$ we introduce the three domains $D_R := \Phi(B_R(0))$, $D_q :=
\Phi(B_q(0))$ and $D_s := \Phi(B_s(0))$. We assume that $\Phi$ is
bijective on the largest ball, $\Phi|_{B_s(0)} : B_s(0) \to D_s$ has a
inverse $\Psi:D_s\to B_s(0)$. 

\begin{proposition}[Resonance for non-radial structures without core]
  \label{prop:holomorphic-trafo}
  Fix radii $1 < R < q < s$, holomorphic maps $\Phi$ and
  $\Psi=\Phi^{-1}$ as above. Assume $s > q^2/R$. Consider the equation
  \eqref {eq:original}:
  \[ \nabla\cdot\left(a_\eta \nabla u_\eta\right)\ =\ f\]
  where $a_\eta(x)=A(x)+i\eta$ and 
  \begin{equation}
    A(x) = \left\{ 
      \begin{array}{cl} 
        -1\quad &  x\in D_R\\
        +1\quad &  x\notin D_R
      \end{array} \right.  
    \label{A-def-general}
  \end{equation}   
  Then there exists a source $f = F\, \calH^1\lfloor \del D_q$, where
  $F\in L^2(\del D_q)$, such that the configuration is resonant, {\it
    i.e.}\ $E_\eta(u_\eta)\to \infty$ for $\eta\to 0$.
\end{proposition}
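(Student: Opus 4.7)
My plan is to apply the dual variational principle (Lemma~\ref{lem:dual}): to prove $E_\eta\to\infty$ it suffices to exhibit trial pairs $(v_\eta,\psi_\eta)$ satisfying the PDE constraint $\nabla\cdot(A\nabla\psi_\eta)+\eta\Delta v_\eta=0$ whose dual energies $J_\eta(v_\eta,\psi_\eta)$ diverge along a subsequence $\eta_j\to 0$. The trial functions will be built from conformal pullbacks of the perfect plasmon waves $\hat\psi_k$ of Remark~\ref{rem:perfect-plasmon}, exploiting the two facts that are conformally invariant in the plane: the Dirichlet integral, and the form of the equation $\nabla\cdot(A\nabla u)=0$ with $A=\pm 1$.

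Using the hypothesis $s>q^2/R$, I first fix an intermediate radius $s'\in(q^2/R,s)$ together with a smooth cutoff $\chi$ on $\R^2$ with $\chi\equiv 1$ on $B_{s'}$ and $\supp\chi\subset B_s$. For each $k\in\N$ set
\begin{equation*}
\tilde\psi_k(x) := \chi(\Psi(x))\,\hat\psi_k(\Psi(x)) \quad\text{for }x\in D_s, \qquad \tilde\psi_k := 0 \quad\text{on }\R^2\setminus D_s.
\end{equation*}
By conformal invariance, $\tilde\psi_k$ is $A$-harmonic on $D_{s'}$, so $g_k:=\nabla\cdot(A\nabla\tilde\psi_k)$ is an $L^2$ function supported in the annular region $\Phi(B_s\setminus B_{s'})$. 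I then take $\psi_\eta:=\lambda_\eta\tilde\psi_k$ for a scalar $\lambda_\eta$ to be optimized, and let $v_\eta\in\dot H^1(\R^2)$ solve $\eta\Delta v_\eta=-\lambda_\eta g_k$ with $\nabla v_\eta\to 0$ at infinity; by construction, $(v_\eta,\psi_\eta)$ satisfies the constraint of \eqref{eq:dual}. For the source I choose $f=F\,\calH^1\lfloor\partial D_q$, where $F$ is determined by requiring $\hat F(y):=F(\Phi(y))\,|\Phi'(y)|$ on $\partial B_q$ to have Fourier expansion $\hat F(qe^{i\theta})=\sum_{k\ge1}\alpha_k\cos(k\theta)$ with $\alpha_k\neq 0$ for infinitely many $k$ --- for example, $\alpha_k=1/k^2$.

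Three estimates then drive the computation of $J_\eta$. (a) The conformal change of variables $x=\Phi(y)$ combined with $\chi\equiv 1$ on $B_q\subset B_{s'}$ gives $\int f\,\tilde\psi_k = \pi q\,\alpha_k\,R^{2k}q^{-k}$. (b) Conformal invariance of the Dirichlet integral on $D_s$, plus a direct bound on the cutoff correction, yields $\|\nabla\tilde\psi_k\|_2^2\lesssim k\,R^{2k}$. (c) For the compensation cost I use $\|\nabla v_\eta\|_2^2=-(\lambda_\eta/\eta)^2\int g_k\zeta_k$ where $\Delta\zeta_k=-g_k$, combined with $\int g_k=0$, the Poincar\'e--Wirtinger inequality on the transition annulus, and the decay bound $\|\hat\psi_k\|_{H^1(B_s\setminus B_{s'})}\lesssim \sqrt{k}\,R^{2k}s'^{-k}$, to obtain $\|\nabla v_\eta\|_2^2\lesssim(\lambda_\eta/\eta)^2\,k\,R^{4k}/s'^{2k}$.

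Optimizing the downward parabola $J_\eta(\lambda_\eta)$ and balancing $\eta\asymp(R/s')^k$ so that the two loss terms become comparable then gives
\begin{equation*}
\max_{\lambda_\eta}J_\eta \ \gtrsim\ \frac{\alpha_k^2}{k}\left(\frac{Rs'}{q^2}\right)^{\!k}.
\end{equation*}
Since $s'>q^2/R$, the base $Rs'/q^2$ exceeds $1$, so along the sequence $\eta_k=(R/s')^k$ this lower bound tends to $+\infty$ (e.g.\ for $\alpha_k=1/k^2$), establishing resonance. The main technical obstacle will be the sharp bound (c) on the compensation cost: one must verify that the error incurred by cutting off $\chi$ decays exponentially in $k$ at a rate that, even after being inflated by $1/\eta$ in the dual energy, still leaves room for the source coupling $R^{2k}q^{-k}$ to drive the energy to infinity. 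The cutoff placement $s'>q^2/R$ --- possible precisely because $s>q^2/R$ --- is what makes the decay rate of $\hat\psi_k$ on $B_s\setminus B_{s'}$ beat the source factor $q^{-k}$, and is the essential point at which the hypothesis enters.
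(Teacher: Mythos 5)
Your proposal is correct and follows essentially the same route as the paper: conformal pullback of the perfect plasmon waves, a cutoff placed so that the transition annulus begins past $q^2/R$, the dual variational principle, and the balance $\eta\asymp(R/Q)^k$ with $Q=s'$. The minor differences — defining $\hat F=(F\circ\Phi)|\Phi'|$ to make the source coupling exact rather than a one-sided bound, and optimizing the quadratic in $\lambda_\eta$ rather than fixing it so the loss terms stay bounded — are cosmetic and lead to the same conclusion.
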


\begin{proof}
  We proceed as in Proposition \ref {prop:nocore} and exploit the dual
  variational principle.  Our aim is to construct a sequence $(v_\eta,
  \psi_\eta)$, satisfying the constraint $\nabla\cdot (A\nabla
  \psi_\eta) + \eta\Delta v_\eta = 0$ of \eqref {eq:dual} and such
  that $J_\eta(v_\eta,\psi_\eta) \to \infty$.  However in this case,
  due to the coupling of Fourier harmonics in a non-radial geometry,
  we cannot restrict ourselves to a harmonic of fixed index, $k$.

  We start the construction from the perfect plasmon waves
  $\hat\psi_k$ of \eqref {eq:choice-psi-emptyset}. They are mapped
  with $\Phi$ to functions $\tilde\psi_k : D_s \to \R$, $\tilde\psi_k
  := \hat\psi_k\circ \Psi$. We note that these functions are harmonic
  in $D_s\setminus \del D_R$. Regarding the jump of the normal
  derivative along $\del D_R$, we can calculate as follows. The normal
  vector $e_r$ to $\del B_R(0)$ is mapped to the (not normalized)
  normal vector $\nu = D\Phi\cdot e_r$ of $\del D_R$.  In a point
  $x\in \del D_R$ the matrix $D\Phi$ can be represented by a complex
  number $M\in \C$, and we can calculate $\la \nu, \nabla \tilde\psi_k
  \ra = \la M e_r , ((\nabla \hat\psi_k)^T \cdot M^{-1})^T \ra = \la M
  e_r, (M^{-1})^T \nabla \hat\psi_k \ra = \la e_r, \nabla \hat\psi_k
  \ra$. Thus, $\tilde\psi_k$ solves
 \begin{equation}
 \nabla\cdot \left(A(x)\nabla \tilde\psi_k(x) \right) = 0,\text{ for }  \ x\in D_s\ .
 \label{tpsi-is-A-harmonic}
 \end{equation}

  In order to define functions on all of $\R^2$, we introduce a smooth
  cut-off function $h: \R^2\to [0,1]$. By our assumption on the radii,
  we can choose a number $Q\in (q^2/R, s)$ and $D_Q =
  \Phi(B_Q(0))$.  Let $h$ be such that  $h \equiv 1$
  on $D_Q$ and $h\equiv 0$ on $\R^2\setminus D_s$. 
  
  We now construct a comparison function of the form
  \begin{equation}
  \psi_\eta(x) := \lambda_\eta\, h(x)\, \tilde\psi_k(x)\,,\label{psi-eta}
  \end{equation}
  where $\lambda_\eta$ and $k=k(\eta)$ are to be chosen below.
   Once $\psi_\eta$ is determined, the function $v_\eta$ is
  chosen as the bounded solution of 
  \begin{equation}
    \eta\Delta v_\eta(x) = - \nabla\cdot (A(x)\nabla \psi_\eta(x))
    = - \nabla\cdot (A(x)\nabla \psi_\eta(x))\, {\bf 1}_{D_s\setminus D_Q}(x)\,.
    \label{v-eta}
  \end{equation}
  The latter equality holds by \eqref{tpsi-is-A-harmonic} since
  $\nabla h(x)\ne0$ precisely on $D_s\setminus D_Q$.  The pair
  $(v_\eta,\psi_\eta)$ satisfies, by construction, the constraint
  \eqref {eq:dual}. It only remains to verify
  $J_\eta(v_\eta,\psi_\eta) \to \infty$.

  To motivate our choice of $\lambda_\eta$ and $k(\eta)$, we first
  compute the contributions to the energy, $J_\eta$, of the functions
  $v_\eta$ and $\psi_\eta$.  In the following calculations, $C$
  denotes a constant that is independent of $k$ and $\eta$; $C$
  depends on the geometry of the structure and its value may change
  from one line to the next. For the contribution from $v_\eta$ we
  find
  \begin{align}
    \eta\int |\nabla v_\eta|^2 
    &\le C\, \frac{1}{\eta}\int_{D_s\setminus D_Q}|\nabla\psi_\eta|^2\nonumber\\
    &\le C\, \frac{\lambda_\eta^2}{\eta}\int_{D_s\setminus D_Q}\
    |\tilde\psi_k|^2+|\nabla\tilde\psi_k|^2 \le C\, 
    \frac{\lambda_\eta^2}{\eta} k \left(\frac{R^2}{Q}\right)^{2k}\ .
    \label{energy-veta}
  \end{align}
  The contribution to the energy, $J_\eta$, from $\psi_\eta$ is
  \begin{align}
    \eta \int |\nabla \psi_\eta|^2 \le C\, \eta\lambda_\eta^2\left(
      kR^{2k} +\frac{1}{k}\left(\frac{R^2}{Q}\right)^{2k} \right)\
    \le C\, \eta k \lambda_\eta^2R^{2k}\ .
    \label{energy-psieta}
  \end{align}
  It remains to evaluate the first term of the energy, $J_\eta$, for a
  source $f=F\calH^1\lfloor \del D_q$. For some positive constant
  $c>0$ we obtain
  \begin{align}
    \int f\,\psi_\eta &= \lambda_\eta \int_{\del D_q} F \tilde\psi_k
    \ge c\,  \lambda_\eta \int_{\del B_q(0)} (F\circ\Phi) \hat\psi_k\nonumber\\
    &= c\, \lambda_\eta\ \left(\frac{R^2}{q}\right)^k\ \int_{\del B_q(0)}
    (F\circ\Phi)\cos (k\theta) \ \equiv\  c \lambda_\eta\
    \left(\frac{R^2}{q}\right)^k\ \alpha_k\,,
    \label{f-induced}
  \end{align}
  where $\alpha_k$ is a Fourier coefficient for $F\circ\Phi$.  We
  shall drive $J_\eta(v_\eta,\psi_\eta)$ to infinity by driving the
  contribution \eqref{f-induced} to infinity while keeping the
  contributions \eqref{energy-veta} and \eqref{energy-psieta} bounded.

  Balancing the upper bounds \eqref{energy-veta} and
  \eqref{energy-psieta} requires that we choose $k=k(\eta)$ such that:
  \begin{align*}
    \left(R/Q \right)^{k(\eta)} \sim \eta.
  \end{align*}
  Since $R/Q < R^2/q^2 < 1$, there is such $k(\eta)$, and $k(\eta)\to
  \infty$ as $\eta\downarrow 0$.  With this choice of $k$,
  \begin{equation}
    \eta\int |\nabla v_\eta|^2 + \eta\int |\nabla\psi_\eta|^2\ 
    \le C\, \lambda_\eta^2\, k(\eta) \left(\frac{R^3}{Q}\right)^{k(\eta)}\,.
    \label{bounded-part}
  \end{equation}
  To keep this contribution to the energy bounded we choose
  $\lambda_\eta$ so that
  \begin{equation}
    \lambda_\eta^2 = \frac{1}{k(\eta)} \left(\frac{Q}{R^3}\right)^{k(\eta)}\,.
    \label{lambda-eta}
  \end{equation}

  Finally, substitution of \eqref{lambda-eta} into \eqref{f-induced}
  we obtain, for some $c, C >0$:
  \begin{equation}
    \Big|\ J_\eta(v_\eta,\psi_\eta)\ \Big|\ 
    \ge\ \left|\ \int f \psi_\eta\ \right|\ -\ C\ 
    \ge c\, \frac{1}{\sqrt{k(\eta)}}\ \left(\frac{Q R}{q^2}\right)^{k/2}\ |\alpha_k|\ -\ C .
  \end{equation}

  We have chosen $Q$ with $Q>q^2/R$. Therefore, if we choose $F$ such
  that its Fourier coefficients, $\alpha_k$, are not decaying too
  rapidly, we have $|J_\eta(v_\eta,\psi_\eta)|\to\infty$ as
  $\eta\to0$. This completes the proof of Proposition
  \ref{prop:holomorphic-trafo}.
\end{proof}

Next, we consider a non-radial geometry with a core, $\Sigma\subset
B_1(0)$, of arbitrary shape. The following resonance result has a
proof very similar to that of Proposition~\ref{prop:nocore}.

\begin{theorem}[Any shape core, source at $q<R^*\ \implies$\ Resonance]
  \label{thm:inner-resonance}
  Let $\Sigma\subset B_1(0)$ be an arbitrary core with $\Gamma =
  \del\Sigma$ a curve of class $C^2$. Then, for every radius $R < q <
  R^* := R^{3/2}$, there exists $f$ supported at a distance $q$, such
  that the configuration is resonant.
\end{theorem}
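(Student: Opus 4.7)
The plan is to apply the dual variational principle of Lemma~\ref{lem:dual} and construct, for $\eta=\eta_j\to 0$, trial pairs $(v_\eta,\psi_\eta)$ that satisfy the dual PDE constraint and drive $J_\eta(v_\eta,\psi_\eta)\to\infty$. As in Proposition~\ref{prop:nocore}, the natural building block is a rescaled perfect plasmon wave
\[
\psi_\eta \;:=\; \lambda_\eta\,\hat\psi_{k},
\]
with integer $k=k(\eta)$ and amplitude $\lambda_\eta>0$ to be chosen. Since $\hat\psi_k$ was tailored to the coreless coefficient ($A\equiv -1$ throughout $B_R(0)$) rather than to the true $A$ (which equals $+1$ on $\Sigma$), a distributional integration by parts will give
\[
\nabla\cdot(A\nabla\hat\psi_k) \;=\; -2\,(\partial_\nu\hat\psi_k)\big|_\Gamma\,\calH^1\lfloor\Gamma,
\]
where $\nu$ is the outward unit normal of $\Sigma$, the contribution from $\del B_R(0)$ cancels by the choice of $\hat\psi_k$, and the factor $2$ reflects the jump of $A$ from $-1$ to $+1$ across $\Gamma$. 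Because $\hat\psi_k$ is harmonic in $\Sigma$, the right-hand side has total mass $\int_\Sigma\Delta\hat\psi_k=0$, so I can define $v_\eta\in\HH$ (unique up to constants) as the decaying solution of $\eta\Delta v_\eta = 2\lambda_\eta(\partial_\nu\hat\psi_k)|_\Gamma\,\calH^1\lfloor\Gamma$, and then $(v_\eta,\psi_\eta)$ satisfies the dual constraint by construction.

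The next step is to estimate the three terms of $J_\eta(v_\eta,\psi_\eta)$. From Remark~\ref{rem:perfect-plasmon}, $\eta\int|\nabla\psi_\eta|^2 = 2\pi\eta\lambda_\eta^2 k R^{2k}$. Choosing the source $f:=\sum_{m\ge 1} m^{-1}f_m^q\in L^2(\del B_q(0))$ and using Fourier orthogonality on $\del B_q(0)$ together with the explicit value $\hat\psi_k(q,\theta) = (R^2/q)^k\cos(k\theta)$, the source term evaluates to $\int f\,\psi_\eta = \pi q\,\lambda_\eta k^{-1}(R^2/q)^k$. For the corrector $v_\eta$, standard single-layer potential theory together with harmonicity of $\hat\psi_k$ in a fixed neighborhood of $\overline\Sigma\subset B_1(0)\subset B_R(0)$ will yield
\[
\int|\nabla v_\eta|^2 \;\le\; C\Big(\tfrac{\lambda_\eta}{\eta}\Big)^2\,\big\|(\partial_\nu\hat\psi_k)|_\Gamma\big\|_{H^{-1/2}(\Gamma)}^2 \;\le\; C\Big(\tfrac{\lambda_\eta}{\eta}\Big)^2\int_\Sigma|\nabla\hat\psi_k|^2 \;\le\; C\Big(\tfrac{\lambda_\eta}{\eta}\Big)^2\,\pi k,
\]
the last step using the elementary computation $\int_{B_1(0)}|\nabla(r^k\cos k\theta)|^2 = \pi k$ and the fact that $\Sigma\subset B_1(0)$.

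To conclude, I would balance the three contributions by choosing $k(\eta)\in\N$ so that $R^{k(\eta)}\sim 1/\eta$ (i.e.\ $k(\eta)\sim\log(1/\eta)/\log R$) and setting $\lambda_\eta := \sqrt{\eta/k(\eta)}$. Then both $\eta\int|\nabla\psi_\eta|^2 = 2\pi\eta^2 R^{2k(\eta)}$ and $\eta\int|\nabla v_\eta|^2\le C$ stay bounded, while substitution gives
\[
\int f\,\psi_\eta \;\sim\; \frac{1}{k(\eta)^{3/2}}\,\eta^{-3/2+(\log q)/(\log R)}.
\]
The hypothesis $q<R^{3/2}$ makes the exponent $-3/2+(\log q)/(\log R)$ strictly negative, so this source term dominates the logarithmic factor and $J_\eta(v_\eta,\psi_\eta)\to\infty$ along $\eta_j\to 0$; by Lemma~\ref{lem:dual}, this is resonance.

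The main obstacle is the uniform-in-$k$ bound on $\|(\partial_\nu\hat\psi_k)|_\Gamma\|_{H^{-1/2}(\Gamma)}$ for an arbitrary $C^2$ core: in the radial case $\Gamma=\del B_1(0)$ one has $\partial_\nu\hat\psi_k = k\cos(k\theta)$ and the bound is explicit, but for a general $\Gamma\subset\overline{B_1(0)}$ no such formula is available. The resolution is to exploit harmonicity of $\hat\psi_k$ in $\Sigma$ together with the duality characterization of the normal-derivative trace, $\|\partial_\nu u\|_{H^{-1/2}(\Gamma)}\le C\|\nabla u\|_{L^2(\Sigma)}$ for $u$ harmonic in $\Sigma$, which transfers the needed bound to the elementary interior Dirichlet integral $\int_{B_1(0)}|\nabla\hat\psi_k|^2 = \pi k$. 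Everything else is a direct extension of the coreless calculation in Proposition~\ref{prop:nocore}.
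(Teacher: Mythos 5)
Your proposal is correct and follows essentially the same route as the paper: dual variational principle, trial functions $\psi_\eta = \lambda_\eta\hat\psi_{k(\eta)}$ built from the perfect plasmon waves, $v_\eta$ defined as the $\eta^{-1}$-scaled Newtonian potential of the flux jump on $\Gamma$, and the cutoff $R^{k(\eta)}\sim 1/\eta$. The only cosmetic differences are that you fix a concrete source $f=\sum_m m^{-1}f_m^q$ and take $\lambda_\eta=\sqrt{\eta/k}$, whereas the paper keeps $f=\sum_k\alpha_k f_k^q$ general and tunes $\lambda_\eta$ to $\alpha_{k(\eta)}(R/q)^{k(\eta)}/k(\eta)$ so as to obtain the stronger conclusion that resonance occurs for \emph{every} $f$ whose Fourier coefficients decay slowly enough; and you spell out the bound $\|\nabla\cdot(A\nabla\hat\psi_k)\|_{H^{-1}}^2\lesssim\|\partial_\nu\hat\psi_k\|_{H^{-1/2}(\Gamma)}^2\lesssim\int_\Sigma|\nabla\hat\psi_k|^2\le\pi k$, which the paper asserts tersely as an elliptic estimate. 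Both yield the decisive factor $(R^3/q^2)^{k(\eta)}\to\infty$ when $q<R^{3/2}$.
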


\noindent \textit{Remark.} Our proof actually gives a slightly
stronger result. Not only does there exist $f$, supported at
$q\in(R,R^*)$, $E_\eta\to\infty$ as $\eta\to0$, but furthermore the
divergence of the energy occurs for every $f$ having high frequency
components of sufficiently large amplitude.

\begin{proof}
  We fix $R < q < R^*$ and a sequence $\eta = \eta_j \to 0$. We
  consider the source function $f = \sum_{k=1}^\infty \alpha_k f_k^q$
  with $f_k^q$ as in \eqref {eq:form-f-special}.  Our aim is to
  construct a sequence $(v_\eta, \psi_\eta)$, satisfying $\nabla\cdot
  (A\nabla \psi_\eta) + \eta\Delta v_\eta = 0$ of \eqref {eq:dual}
  with $J_\eta(v_\eta,\psi_\eta) \to \infty$.

  As in the proof of Proposition \ref {prop:nocore} our sequence of
  trial functions is built using perfect plasmon waves; as before we
  choose the harmonic index $k=k(\eta)$ to depend on $\eta$ and set
  \begin{align*}
    \psi_\eta(x) = \lambda_\eta \hat\psi_{k(\eta)}(x)\, . 
  \end{align*}
  The numbers $k = k(\eta)\in \N$ and $\lambda_\eta\in \R$ will be
  chosen below.

  The function $\psi_\eta$ is not $A$-harmonic along the core
  interface $\del\Sigma\subset B_1(0)$. In order to satisfy the constraint we
  therefore define $v_\eta$ as the solution of $\eta\Delta v_\eta =
  -\nabla\cdot (A\nabla \psi_\eta)$. By elliptic estimates
  \[ \eta\|\nabla v_\eta\|_{L^2(\mathbb{R}^2)}^2\le C\, \eta^{-1}\
  \|\nabla\cdot (A\nabla \psi_\eta)\|_{H^{-1}(\mathbb{R}^2)}^2\le C\,
  \eta^{-1}\ \lambda_\eta^2\, k(\eta)\,.\]

  It remains to calculate the energy $J_\eta(v_\eta,\psi_\eta)$. We
  choose $k = k(\eta)$ to be the smallest integer with $R^{-k} < \eta$
  and note that $R^{-k+1} \ge \eta$ holds. Exploiting
  \eqref{eq:lem-dual-conseq}, we obtain, for some $c_0 >0$
  \begin{align*}
    E_\eta(u_\eta) &\ge J_\eta(v_\eta,\psi_\eta) = \int f \psi_\eta -
    \frac{\eta}{2} \int |\nabla \psi_\eta|^2
    -  \frac{\eta}{2} \int |\nabla v_\eta|^2\\
    &\ge c_0 \alpha_k \lambda_\eta q^{-k} R^{2k}
    - C\eta k \lambda_\eta^2 R^{2k} - C \eta^{-1} k\, \lambda_\eta^2\\
    &= \lambda_\eta R^k \left(\ c_0\alpha_k\left(\frac{R}{q}\right)^k
      - C \lambda_\eta k (\eta R^k) - C\frac{1}{(\eta R^k)}
      \lambda_\eta k \right)
    \end{align*}
    The choice of $k$ with $1 < \eta R^k \leq R$ ensures that the last
    two contributions are of comparable order.
    We find, for some $C_0 > 0$,
    \begin{align*}
      E_\eta(u_\eta)\ &\ge \ \lambda_\eta R^{k(\eta)}
      \left( c_0\alpha_{k(\eta)}\left(\frac{R}{q}\right)^{k(\eta)} - C_0
        \lambda_\eta k(\eta) \right).
    \end{align*}
    We choose $\lambda_\eta$ such that the right hand side is
    positive, specifically
    \[\ \lambda_\eta\ = \frac{1}{2C_0 k(\eta)}\ c_0\alpha_{k(\eta)}
    \left(\frac{R}{q}\right)^{k(\eta)}\,.\] Thus,
    \begin{align*}
      E_\eta(u_\eta)\ &\ge \ \lambda_\eta R^{k(\eta)}\left(\ \frac{1}{2}\
        c_0\alpha_{k(\eta)}\left(\frac{R}{q}\right)^{k(\eta)}\
      \right)\ =\ \frac{1}{4C_0 k(\eta) }\ (c_0\alpha_{k(\eta)})^2\
      \left(\frac{R^3}{q^2}\right)^{k(\eta)}.
    \end{align*}
    By assumption, $q$, the location of the source satisfies $q<R^*$,
    or equivalently $q^2 < R^3$.  To ensure that $E_\eta(u_\eta)\to
    \infty$ it suffices to assume that the sequence of Fourier
    coefficients $(\alpha_k)_k\in l^2(\N)$ decays sufficiently
    slowly. In particular, if $(\alpha_k)_k$ decays algebraically we
    have $E_\eta(u_\eta)\to \infty$. This completes the proof of
    Theorem \ref{thm:inner-resonance}.
\end{proof}

\section{Non-resonance in the radial case}
\label{sec.nonres.radial}

In this section, we use the primal variational principle
\eqref{eq:primal} to show non-resonance in the radial case for sources
located at distance larger than the critical radius $R^{\ast}$.

\begin{proposition}[Non-resonance beyond $R^*$ in the radial case]
  \label{prop:spher-incl}
  Consider $a_\eta(x)$ of \eqref {eq:a_eta}--\eqref {eq:A-def}, with
  the radial concentric arrangement $\Sigma = B_1(0)\subset
  B_R(0)$. Assume $f= F \calH^1\lfloor \del B_q(0)$, $F \in L^2(\del
  B_q(0))$.  Then, for any $q > R^* := R^{3/2}$, the configuration is
  non-resonant.
\end{proposition}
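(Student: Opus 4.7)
The plan is to invoke the primal variational principle (Lemma~\ref{lem:primal}): it suffices to exhibit, for every $\eta>0$, a pair $(\tilde v_\eta,\tilde w_\eta)\in\HH\times\HH$ satisfying the constraint $\nabla\cdot(A\nabla \tilde v_\eta)-\Delta \tilde w_\eta = f$ with $I_\eta(\tilde v_\eta,\tilde w_\eta)$ uniformly bounded as $\eta\to 0$. First I would Fourier-decompose $F\in L^2(\partial B_q)$ as $F(\theta)=\sum_{k\ge 1}(\alpha_k\cos k\theta + \beta_k\sin k\theta)$ and, by linearity and orthogonality of the Fourier harmonics, reduce to constructing for each $k$ a trial pair $\tilde v^{(k)} = V_k(r)\cos k\theta$, $\tilde w^{(k)} = W_k(r)\cos k\theta$ that handles the cosine source $\alpha_k f_k^q$ (the sine case is analogous). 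Summing the per-mode trials gives $(\tilde v_\eta,\tilde w_\eta)$ and the energy decomposes accordingly.

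For each $k$ I take $V_k$ to be a piecewise linear combination of $r^k$ and $r^{-k}$ in the four radial regions (core, shell, inner matrix, outer matrix), regular at $r=0$ and decaying at $r=\infty$. Continuity at $r=1,R,q$ together with the prescribed source jump $[AV_k']_{r=q} = \alpha_k$ pins down the inner-matrix $r^k$-coefficient $C = -\alpha_k/(2kq^{k-1})$ and leaves a two-parameter family of $V_k$'s. I select a one-parameter subfamily by demanding that the $r^k$-coefficient in the shell vanish; this is the crux, since the shell $r^k$ component is precisely what produces the plasmonic amplification and the core-energy blow-up of the real-equation solution when $q<R^2$. The remaining parameter $D$ (the coefficient of $r^{-k}$ in the inner matrix) is optimized below. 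A short calculation of the interface jumps shows that the defect $\nabla\cdot(A\nabla \tilde v^{(k)})-\alpha_k f_k^q$ vanishes at $r=1$ and reduces at $r=R$ to a concentrated source of strength $-2kD/R^{k+1}$ times $\cos k\theta\,\calH^1\lfloor\partial B_R$. I then let $W_k$ be the mode-$k$ Newton potential of this source (a scalar multiple of the classical Green's function $G_k^{(R)}$ in the plane), so that the constraint $\nabla\cdot(A\nabla \tilde v^{(k)}) - \Delta \tilde w^{(k)} = \alpha_k f_k^q$ holds by construction.

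The per-mode energy $I_\eta^{(k)}$ is computed from explicit Dirichlet integrals of $r^{\pm k}\cos k\theta$ over the annular regions, giving a quadratic in $D$. Minimizing and retaining only the leading contributions for large $k$ yields
\[
\min_D I_\eta^{(k)} \;\le\; C\,\frac{\alpha_k^2 q^2}{k}\left(\eta + \frac{\eta (R^2/q)^{2k}}{1+\eta^2 R^{2k}}\right).
\]
Setting $\gamma := \log q/\log R$ and $x := \eta^2 R^{2k}$, the second summand equals $x^{2-\gamma}\eta^{2\gamma-3}/(1+x)$; elementary calculus shows that $\sup_{k\ge 1}\eta(R^2/q)^{2k}/(1+\eta^2 R^{2k}) \le C\eta^{2\gamma-3}$. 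Summing over $k$ and using Parseval $\sum_k(\alpha_k^2+\beta_k^2)\le C\|F\|_{L^2(\partial B_q)}^2$ gives
\[
I_\eta \;\le\; C\bigl(\eta + \eta^{2\gamma-3}\bigr)\|F\|_{L^2(\partial B_q)}^2.
\]
For $q>R^* = R^{3/2}$ one has $2\gamma-3>0$, so $I_\eta\to 0$ as $\eta\to 0$, whence $E_\eta\le I_\eta\to 0$ and the configuration is non-resonant.

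The main obstacle is identifying the correct trial function. Each of the naive choices fails: taking $\tilde w\equiv 0$ with $\tilde v$ the real-equation solution produces infinite core energy in the limit when $R<q<R^2$, while taking $\tilde v\equiv 0$ with $\tilde w$ the Newton potential of $f$ contributes a term of order $1/\eta$ to $I_\eta$. Only the interplay---killing the shell $r^k$ component in $V_k$ and tuning $D$ optimally---reproduces the regularizing factor $(1+\eta^2 R^{2k})^{-1}$ present in the true complex solution and yields the uniform energy bound above.
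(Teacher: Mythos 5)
Your proof is correct and takes a genuinely different route from the paper, even though the final bound agrees. The paper constructs its primal trial pair by splitting $f$ into low- and high-frequency parts at a hard cutoff $k^*(\eta)$ (the smallest integer with $R^{-k^*}<\eta$) and uses two distinct mode-wise trial families: the perfect-plasmon-type $\hat v_k$ (which has no $r^{-k}$ component in the inner matrix, i.e.\ your $D=0$) for $k\le k^*$, and the much cruder $\hat V_k$ (harmonic away from $\del B_q$ only, so error at both $\del B_1$ and $\del B_R$) for $k>k^*$, letting $w_\eta$ absorb the residuals; the threshold $k^*$ is then tuned so that both error contributions scale like $(R^3/q^2)^{k^*}\sim\eta^{2\gamma-3}$. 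Your construction replaces the hard cutoff by a one-parameter family in which the free coefficient $D$ interpolates continuously between these two regimes: at $D=0$ you recover the paper's $\hat v_k$, while at $D\approx -CR^{2k}$ the trial field nearly vanishes in the core and shell and the source burden falls on $w$, which is exactly the paper's high-frequency construction. Optimizing over $D$ per mode reproduces the regularizing factor $(1+\eta^2R^{2k})^{-1}$ present in the true solution and yields the mode-by-mode bound directly, so the balance that the paper achieves by choosing $k^*$ is found automatically. This is arguably cleaner and gives a slightly sharper statement, namely an explicit decay $I_\eta\lesssim\eta+\eta^{2\gamma-3}$. (One further simplification in your version: requiring no $r^k$-term in the shell makes the flux continuous across $\del B_1$ automatically, so your $w$ only needs to correct an error on $\del B_R$, whereas the paper's $\hat V_k$ create errors on both interfaces.) One small blemish: your claim $\sup_{k\ge1}\eta(R^2/q)^{2k}/(1+\eta^2 R^{2k})\le C\eta^{2\gamma-3}$ is obtained by bounding $\sup_{x>0}x^{2-\gamma}/(1+x)$, which is only finite for $\gamma\le 2$; for $q>R^2$ (so $\gamma>2$) you need to use the constraint $x=\eta^2R^{2k}\ge\eta^2R^2$, which then gives the bound $C\eta$ rather than $C\eta^{2\gamma-3}$. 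Either way the sum tends to zero, so the conclusion stands, but the stated exponent should be replaced by $\min(1,2\gamma-3)$.
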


\begin{proof} 
  Expanding in Fourier series we have $F = \sum_{k\ge 1} \alpha_k
  \cos(k\theta) + \sum_{k\ge 1} \beta_k \sin(k\theta) \equiv F_\even +
  F_\odd$. It suffices to prove that $f_\even = F_\even \calH^1\lfloor
  \del B_q(0)$ and $f_\odd = F_\odd \calH^1\lfloor \del B_q(0)$ are
  non-resonant.  We give the argument for $f_\even$; the argument for
  $f_\odd$ is the same. Accordingly, we consider from now on $f =
  \sum_k \alpha_k f_k$, where $f_k$ is given by \eqref
  {eq:form-f-special} and $(\alpha_k)_{k\ge1} \in l^2(\N;\R)$; we
  suppress here the superscript $q$ of $f_k^q$.  We will construct
  test functions in Step 1 and compute their energy in Step 2 to prove
  the Proposition.

  \medskip 
  \noindent \emph{Step 1. Construction of comparison functions.}  To
  prove non-resonance, we use the primal variational problem \eqref
  {eq:primal}. Consider a fixed sequence $\eta = \eta_j$ tending to
  zero. We shall construct $(v_\eta, w_\eta)$, satisfying the
  constraint
  \begin{equation}
    \nabla\cdot \left( A\nabla v_\eta\right)\ -\ \Delta w_\eta\ =\ f
    \label{constraint}
  \end{equation}
  such that the energy along this sequence, $I_\eta(v_\eta, w_\eta)$
  remains bounded. Our strategy is to decompose the source $f$ into a
  low frequency part and a high frequency part as
  \begin{equation}
    \label{eq:low-high-f}
    f = f^\low +f^\high,\qquad 
    f^\low := \sum_{k=1}^{k^*} \alpha_k f_k,\qquad
    f^\high := \sum_{k=k^* +1}^{\infty} \alpha_k f_k\,,
  \end{equation}
  where $k^*$ is chosen to depend on $\eta$. Later we will choose $k^*
  = k^*(\eta)$ to be the smallest integer for which $R^{-k^*} > \eta$.

  Our approach, to be discussed in detail below, is to solve the
  constraint equation \eqref{constraint} in the form: $v_\eta =
  v_\eta^\low + v_\eta^\high$ where
  \begin{align}
    & \textrm{$v_\eta^\low$ satisfies}\ \  
    \nabla\cdot (A\nabla v_\eta^\low) = f^\low\label{vlow}\\
    & \textrm{$v_\eta^\high$ satisfies}\ \ \nabla\cdot (A\nabla
    v_\eta^\high) \Big|_{\del B_q(0)}
    = f^\high\label{vhigh}\\
    & \textrm{$w_\eta$ satisfies}\ \ -\Delta w_\eta = - \nabla\cdot
    (A\nabla v_\eta^\high) + f^\high\label{w_eta}
  \end{align}
  This construction yields $(v_\eta, w_\eta)$, which satisfies the
  constraint \eqref{constraint} of the primal problem
  \eqref{eq:primal}.  Furthermore, we shall see that with an
  appropriate choice of cutoff $k^{\ast}=k^{\ast}(\eta)$ in
  \eqref{eq:low-high-f}, $I_\eta(v_\eta, w_\eta)$ remains bounded as
  $\eta\to0$.  As in our analysis of resonance, we shall make strong
  use of the perfect plasmon waves.
  
  \medskip

  \noindent\emph{Step 1a. Construction of $v_\eta^\low$.} The
  function $v_\eta^\low$ is pieced together using variants of the
  perfect plasmon waves.
    \begin{equation}
    \label{eq:hat-v-k-radial}
    \hat v_k(x) :=
    \begin{cases}
      r^k \cos(k\theta)\quad &\text{ for } |x| \le 1,\\
      r^{-k} \cos(k\theta)\quad &\text{ for } 1 < |x| \le R,\\
      r^k R^{-2k} \cos(k\theta)\quad &\text{ for } R < |x|\le q,\\
      r^{-k} (q/R)^{2k} \cos(k\theta)\quad &\text{ for } q <|x|.
    \end{cases}
  \end{equation}
  We note that $\hat v_k$ has the following properties.
  \begin{enumerate}
  \item $\hat v_k$ is continuous on all $\mathbb{R}^2$
  \item $\hat v_k$ satisfies $\nabla\cdot (A\nabla \hat v_k) = 0$ for
    $x\in\R^2\setminus \del B_q(0)$.
  \item Along $\del B_q(0)$, $\hat v_k$ has a jump in its normal flux:
  \begin{align*}
    [ \nu\cdot \nabla \hat v_k ]_{\del B_q(0)}
    = \left\{ \frac{-k}{q} q^k R^{-2k} - \frac{k}{q} q^k R^{-2k}\right\} 
    \cos(k\theta)
    = -\frac{2k}{q} q^k R^{-2k} \cos(k\theta)\,.
  \end{align*}
  \end{enumerate}
  Therefore, an appropriate constant multiple $\lambda_k\hat v_k$ will
  satisfy
  \begin{equation*}
    \nabla\cdot (A\nabla \lambda_k\hat v_k) = \alpha_k f_k\ \ {\rm
      on}\ \mathbb{R}^2.
  \end{equation*}
  In order to satisfy this relation, we must choose $\lambda_k$ with
  \begin{equation*}
    \lambda_k\cdot \left(-\frac{2k}{q} q^k R^{-2k}\right)\ =\
    \alpha_k\,.
  \end{equation*} 
  We therefore set
  \begin{equation}
    \label{eq:v-eta-low}
    v_\eta^\low := \sum_{k = 1}^{k^*} \lambda_k\, \hat v_k\,,\quad
    \text{with }\
    \lambda_k := -\alpha_k \frac{q}{2k} q^{-k} R^{2k}\,.
  \end{equation}
  This function satisfies \eqref{vlow},
  \begin{align*}
    \nabla\cdot (A\nabla v_\eta^\low) 
    &= [ \nu\cdot \nabla v_\eta^\low]_{\del B_q(0)}\ \calH^1\lfloor \del B_q(0)
    = \sum_{k\le k^*} \lambda_k\, 
    \left(-\frac{2k}{q} q^k R^{-2k} \cos(k\theta) \right)
    \calH^1\lfloor \del B_q(0)\\
    &= \sum_{k\le k^*} \alpha_k \cos(k\theta) \calH^1\lfloor \del B_q(0)
    = f^\low\,.
  \end{align*}

  \smallskip 

  \noindent\emph{Step 1b. Construction of $v_\eta^\high$ and
    $w_\eta$.} The function $v_\eta^\high$ is constructed from the
  elementary plasmon waves $\hat V_k$ for the radius $q$. The
  functions are not tuned to solve $\nabla\cdot(A\nabla v)=0$ on $\del
  B_1(0)$ or $\del B_R(0)$, but they are small along these curves
  (compared to their maximal values). We set
  \begin{equation}
    \label{eq:hat-V-k-radial}
    \hat V_k(x) :=
    \begin{cases}
      r^k \cos(k\theta)\quad &\text{ for } |x| \le q,\\
      r^{-k} q^{2k} \cos(k\theta)\quad &\text{ for } q <|x|.
    \end{cases}
  \end{equation}
  Recall $A(x)=1$ in a neighborhood of $|x|=q$, so the jump in the
  normal flux on $\del B_q(0)$ is
 \[ \left[ \nu\cdot A\nabla \hat V_k\right]\Big|_{\del B_q(0)}\ =\ (-2k/q) q^k
  \cos(k\theta)\ .\]
  Therefore if we set
  \begin{equation}
    \label{eq:v-eta-high}
    v_\eta^\high := \sum_{k > k^*} \lambda_k\, \hat V_k\,,\qquad
    \lambda_k := -\alpha_k \frac{q}{2k} q^{-k}\,,
  \end{equation}
  it follows that \eqref {vhigh} is satisfied:
  \begin{align*}
    \nabla\cdot (A\nabla v_\eta^\high) \Big|_{\del B_q(0)} =
    f^\high\,.
  \end{align*}
  We emphasize that $v_\eta^\high$ is not a solution on all of $\R^2$
  due to normal flux jumps at $|x|=1$ and $|x|=R$.  Since \eqref
  {vlow} is satisfied, the constraint \eqref{constraint} is equivalent
  to \eqref {w_eta},
   \begin{equation}\label{Laplacew}
     \begin{aligned}
       &-\Delta w_{\eta}\ =\ - \nabla\cdot (A\nabla v_\eta^\high) + f^\high\\
       &\quad = -\sum_{k > k^*} \lambda_k\, \left[ \nu\cdot A\nabla \hat
         V_k\right]\Big|_{\del B_1(0)}\calH^1\lfloor \del B_1(0) 
       - \sum_{k > k^*} \lambda_k\,   
       \left[ \nu\cdot A\nabla \hat V_k\right]\Big|_{\del B_R(0)}
       \calH^1\lfloor \del B_R(0)\,.
     \end{aligned}
   \end{equation}
   We use this equation to define $w_\eta$.  

  \medskip
  \noindent\emph{Step 2. Calculation of energies.}  It remains to
  calculate the energy $I_\eta(v_\eta, w_\eta)$, for the above choice
  of $v_\eta = v_\eta^\low + v_\eta^\high$ and $w_\eta$. It is in this
  step that we choose the low-high frequency cutoff,
  $k^{\ast}=k^{\ast}(\eta)$ to ensure that $I_\eta(v_\eta, w_\eta)$
  remains uniformly bounded as $\eta\to0$. Once we verify the
  boundedness of this sequence of energies, the non-resonance property
  of Definition \ref {def:resonance} follows from \eqref
  {eq:lem-primal-conseq}.
 
  \smallskip 
  \noindent\emph{Step 2a. Energy of $v_\eta$.} By the triangle
  inequality we can bound the energies of $v_\eta^\low$ and
  $v_\eta^\high$ separately.  Furthermore,  orthogonality of
  Fourier modes  implies  for $v_\eta^\low$:
  \begin{align}\label{vlow-est} 
    \eta \int |\nabla v_\eta^\low|^2 = \eta \sum_{k\le k^*}
    |\lambda_k|^2\, \int |\nabla \hat v_k|^2 \le C \eta \sum_{k\le
      k^*} |\alpha_k|^2\, \left(\frac{R^2}{q}\right)^{2k}\, \max
    \Big\{ 1, \left(\frac{q}{R^2}\right)^k\Big\}^2\,.
  \end{align} 
  For the case where $q \ge R^2$, we obtain
  \begin{align*}
    \eta \int |\nabla v_\eta^\low|^2
    \le C \eta \sum_{k\leq k^{\ast}} |\alpha_k|^2 
    \le C\eta\,,
  \end{align*}
  which is obviously bounded. The case where $R^*<q<R^2$ is more
  subtle. We note here that estimate \eqref{vlow-est} simplifies in
  this case to
  \begin{align}\label{v-low-1}
    \eta \int |\nabla v_\eta^\low|^2 \le C \eta \sum_{k\leq k^{\ast}}
      |\alpha_k|^2\,   \left(\frac{R^2}{q}\right)^{2 k^{\ast}}.
  \end{align}
  We will come back to this bound soon with a specific choice of
  $k^{\ast}$.

  The energy of $v_\eta^\high$ is easier to control:
  \begin{align}
    \eta \int |\nabla v_\eta^\high|^2 
    \le C \eta \sum_{k> k^*} |\lambda_k|^2\, \int |\nabla \hat V_k|^2
    \le C \eta \sum_{k} |\alpha_k|^2\, 
    \le C.\label{v-high-est}
  \end{align}

  \smallskip 
  \noindent\emph{Step 2b. Energy of $w_\eta$.}  Next we study
  the energy of $w_\eta$. By the properties of the solution operator
  $(-\Delta)^{-1}$ acting on functions in $H^{-1}(\R^2)$ with
  vanishing average, we have
  \begin{align*}
    \frac1{\eta} \int |\nabla w_\eta|^2 \le C \frac1{\eta} \|
    \nabla\cdot (A\nabla v_\eta^\high) - f^\high \|_{H^{-1}}^2 \ \le C
    \frac1{\eta} \sum_{k> k^*} |\lambda_k|^2 R^{2k}\, k.
  \end{align*}
  The last inequality follows from \eqref{Laplacew}, which states that
  $\nabla\cdot (A\nabla v_\eta^\high) - f^\high$ is supported on
  $|x|=1$ and $|x|=R$.
    
  Now by the choice of $\lambda_k$ in \eqref{eq:v-eta-high}, we have
  $|\lambda_k| \le C |\alpha_k| k^{-1} q^{-k}$, and hence
  \begin{align}
    \frac1{\eta} \int |\nabla w_\eta|^2\ \le\ C\ 
    \sum_{k>k^*} |\alpha_k|^2\ \frac{1}{\eta}
    \left(\frac{R}{q}\right)^{2k^*} \label{w-en-est}
  \end{align}
  Balancing the right hand sides of the bounds \eqref{v-low-1} and
  \eqref{w-en-est} we choose $k^{\ast}$ so that
  \begin{equation*}
    \eta\ \left(\frac{R^2}{q}\right)^{2 k^{\ast}}
    \ \sim\ \frac{1}{\eta}\ \left(\frac{R}{q}\right)^{2k^*}\,,
  \end{equation*}
  i.e. we choose $k^*=k^*(\eta)$ to be the smallest integer with
  $R^{-k^{\ast}} < \eta$ such that
    \begin{equation}\label{eq:def-k-ast}
      \eta \leq R^{-k^{\ast}+1}, \quad\text{ and }\quad
      \frac{1}{\eta} < R^{\,k^{\ast}}.
    \end{equation}
    Combining \eqref{eq:def-k-ast} with \eqref {w-en-est} and \eqref
    {v-low-1}, we obtain
  \begin{align}
    \frac1{\eta} \int |\nabla w_\eta|^2\ \le\ C\ \sum_{k}
    |\alpha_k|^2\,
    \left(\frac{R^3}{q^2}\right)^{k^*(\eta)} \label{w-en-est-2}
  \end{align}
  and
  \begin{align}
    \eta \int |\nabla v_\eta^\low|^2 \le\ C \ \sum_{k} |\alpha_k|^2\,
    \left(\frac{R^3}{q^2}\right)^{k^*(\eta)} \label{v-low-2}
  \end{align}
  Thus, if $q > R^*=R^{3/2}$, $I(v_\eta,w_\eta)$ is bounded as
  $\eta\to0$. The proof of non-resonance is complete.
\end{proof}

\section{A non-resonance result in a non-circular geometry}
\label{sec.eccentric}

\subsection{Interaction coefficients}

In this section we use complex notation. We will use complex analysis
in order to calculate certain interaction integrals that will be of
interest for non-resonance results in non-radial geometries.

We identify $\R^2 \equiv \C$ via $(x_1,x_2) \equiv x_1 + i x_2 = z$.
The complex functions $z \mapsto z^k$ and $z \mapsto z^{-k}$ are
holomorphic on $\C$ and $\C\setminus \{0\}$; we have used the real
part of these functions before,
\begin{align*}
  \Re (z^k) = r^k\, \cos(k\theta),\quad
  \Re (z^{-k}) = r^{-k}\, \cos(k\theta),\text{ for } z = r e^{i\theta}\,.
\end{align*}
By the Cauchy-Riemann differential equations, the gradient of these
real functions can easily be calculated,
\begin{align*}
  \nabla \Re (z^k) = \vectdue{\del_{x_1}}{\del_{x_2}} \Re (z^k)
  = \vectdue{\Re \del_{x_1}}{-\Im \del_{x_1}} (z^k)
  \equiv \overline{\del_{x_1} (z^k)} = \overline{(z^k)'} = k \bar z^{k-1}\,.
\end{align*}
Accordingly, we can evaluate for $\nu \equiv z/|z|$ the normal derivative
on a sphere $\del B_r(0)$,
\begin{equation}\label{eq:complex-normal-der}
  \la  \nu , \nabla \Re (z^k) \ra = \Re \left( \frac{\bar z}{|z|} \cdot
    k \bar z^{k-1} \right) = \frac{k}{r} \Re (z^k)
  = \frac{k}{r} r^k\, \cos(k\theta)\,.
\end{equation}
A similar calculation provides for the imaginary part $\la \nu ,
\nabla \Im (z^k) \ra = \frac{k}{r} \Im (z^k)$.

Using the complex notation we define, for arbitrary radius $\rho>0$
and arbitrary center $z_0\in \C$, the function
\begin{equation}
  \label{eq:perfect-z0-plasmon}
  \Psi_m(z) := 
  \begin{cases}
    \Re ((z-z_0)^m) \quad &\text{ for } |z-z_0|< \rho,\\
    \rho^{2m} \Re ((z-z_0)^{-m}) \quad &\text{ for } |z-z_0|\ge \rho\,.
  \end{cases}
\end{equation}
This function is a perfect plasmon wave for $\del B_\rho(z_0)$ in the
sense that it is continuous and it satisfies the equation $\nabla\cdot
(A_{z_0,\rho} \nabla \Psi_m) = 0$ for the coefficient $A_{z_0,\rho} =
1$ outside $B_\rho(z_0)$ and $A_{z_0,\rho} = -1$ inside
$B_\rho(z_0)$. This is easily checked with either the above complex
calculations or the previously imployed real calculations.

Of importance will be the interaction of two perfect plasmon waves
with different centers. In particular, we need to know the following
interaction coefficients.

\begin{definition}\label{def:interaction}
  Let a radius $\rho>0$ and a center $z_0\in \C$ be such that the
  corresponding disk contains the origin, $0 \in \Sigma :=
  B_\rho(z_0)$. For wave numbers $k,m\in \N$, the {\em interaction
    coefficient} is defined as
  \begin{equation}
    \label{eq:interaction-number}
    I_{m,k} := \int_{\del\Sigma} \Re(z^{-k}) (z-z_0)^m\, d\calH^1\ \in \C\,.
  \end{equation}
\end{definition}

The integral in \eqref {eq:interaction-number} is with respect to the
Hausdorff measure; the complex number $I_{m,k}$ can therefore be
identified with the real vector whose two components are the real
integrals $\int \Re(z^{-k}) \Re (z-z_0)^m\, d\calH^1$ and $\int
\Re(z^{-k}) \Im (z-z_0)^m\, d\calH^1$. In this sense, the coefficients
$I_{m,k}$ are, up to normalizing factors, the coefficients for an
expansion of the function $\Re(z^{-k})$ in spherical harmonics of the
sphere $\del B_\rho(z_0)$.

With the help of complex analysis, the interaction coefficients can be
computed explicitly.
\begin{lemma}[Properties of $I_{m,k}$]\label{lem:interactionImk}
  The interaction coefficients for $m\ge 1$ and $k\ge 0$ are given by
  \begin{equation}
    \label{eq:Imk-explicit}
    I_{m,k} =
    \begin{cases}
      0 \quad &\text{ for } m < k,\\
      (-1)^{m-k} \pi \rho \vectdue{m-1}{m-k} z_0^{m-k} 
      \quad &\text{ for } m \ge k
    \end{cases}
  \end{equation}
  For $m=0$ we have $I_{0,k} = 2\pi\rho\, \delta_{0,k}$.

  On the circle $\del B_{\rho}(z_0)$, the function $\Re(z^{-k})$ with
  $k\ge 1$ can be expanded in spherical harmonics with center $z_0$ as
  \begin{equation}
    \label {eq:expansion-newcenter}
    \Re(z^{-k}) 
    = \Re \sum_{m\ge k} (\pi \rho^{2m+1})^{-1} \bar I_{m,k}\cdot
    (z- z_0)^m\,.
  \end{equation}

  For any number $Q>0$ we have the estimate
  \begin{equation}
    \label{eq:Q-Imk-est}
    \sum_{k\in \N} Q^k |I_{m,k}| 
    \le \pi \rho Q\, (|z_0| + Q)^{m-1}
  \end{equation}
  for every $m\in \N$.
\end{lemma}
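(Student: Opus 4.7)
The plan is to reduce all three assertions to contour-integration on $\del B_\rho(z_0)$. Parametrizing $z = z_0 + \rho e^{i\theta}$ yields $d\calH^1 = -i\rho\,dz/(z-z_0)$, and splitting $\Re(z^{-k}) = \tfrac12(z^{-k}+\bar z^{-k})$ I would rewrite, for $m \ge 1$,
\begin{equation*}
I_{m,k}\ =\ \frac{-i\rho}{2}\oint_{\del B_\rho(z_0)}\bigl(z^{-k}+\bar z^{-k}\bigr)(z-z_0)^{m-1}\,dz.
\end{equation*}
The key manipulation is to use the identity $\bar z - \bar z_0 = \rho^2/(z-z_0)$, valid on the circle, to replace $\bar z^{-k}$ by the rational function $(z-z_0)^{k}\bigl(\rho^2+\bar z_0(z-z_0)\bigr)^{-k}$; multiplied by $(z-z_0)^{m-1}$, this is holomorphic throughout the closed disk $\overline{B_\rho(z_0)}$, because its only pole sits at $z_0 - \rho^2/\bar z_0$, whose distance $\rho^2/|z_0|$ from $z_0$ exceeds $\rho$ thanks to the standing hypothesis $|z_0|<\rho$. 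Cauchy's theorem thus kills this contribution. The remaining term $z^{-k}(z-z_0)^{m-1}$ has a single pole at $z=0$, and its residue is the coefficient of $z^{k-1}$ in the binomial expansion of $(z-z_0)^{m-1}$, namely $\binom{m-1}{k-1}(-z_0)^{m-k}$ for $1\le k\le m$ and $0$ otherwise. Rewriting $\binom{m-1}{k-1}=\binom{m-1}{m-k}$ produces the advertised closed form. For $m=0$ the same mechanism works: the $\bar z^{-k}$-integrand stays holomorphic on the disk, while for $k\ge 1$ the two residues $-z_0^{-k}$ at $z=0$ and $z_0^{-k}$ at $z=z_0$ of $z^{-k}/(z-z_0)$ cancel, and $k=0$ alone contributes $I_{0,0}=2\pi\rho$.

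For the expansion \eqref{eq:expansion-newcenter} I would simply recognise $I_{m,k}$ as the $(-m)$-th Fourier coefficient of $\theta\mapsto\Re(z^{-k})$ on the circle. Indeed, $(z-z_0)^m = \rho^m e^{im\theta}$ gives $I_{m,k} = 2\pi\rho^{m+1}\,c_{-m}$, with $c_n := \frac{1}{2\pi}\int_0^{2\pi}\Re(z^{-k})\,e^{-in\theta}\,d\theta$; since the function is real-valued, $c_m = \overline{c_{-m}} = \bar I_{m,k}/(2\pi\rho^{m+1})$. The standard real Fourier series
\begin{equation*}
  \Re(z^{-k})\ =\ c_0\ +\ 2\sum_{m\ge 1}\Re\bigl(c_m e^{im\theta}\bigr)
\end{equation*}
then becomes precisely the claimed expansion once one notes $c_0 = I_{0,k}/(2\pi\rho) = 0$ for $k\ge 1$ and that the first part already forces $I_{m,k}=0$ for $m<k$, truncating the sum to $m\ge k$.

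The estimate \eqref{eq:Q-Imk-est} is then a direct plug-in of the explicit formula, assuming $m\ge 1$ (for which the only nonzero terms run over $1\le k\le m$). Reindexing by $j = m-k$,
\begin{equation*}
  \sum_{k\in\N}Q^k\,|I_{m,k}|\ =\ \pi\rho Q\sum_{j=0}^{m-1}\binom{m-1}{j}Q^{m-1-j}|z_0|^{j}\ =\ \pi\rho Q\,(Q+|z_0|)^{m-1}
\end{equation*}
by the binomial theorem, yielding in fact equality. The main obstacle in the whole argument is the very first step: one must see that $\bar z^{-k}$ on the circle becomes, through $\bar z = \bar z_0 + \rho^2/(z-z_0)$, a rational function of $z$ whose pole lies \emph{outside} $\overline{B_\rho(z_0)}$, so that it contributes nothing by Cauchy's theorem. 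The hypothesis $0\in B_\rho(z_0)$ is used at exactly this point; once the reduction to residues is in place, everything else reduces to routine bookkeeping.
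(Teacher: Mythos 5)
Your argument is correct and follows essentially the same contour-integral route as the paper: the decomposition $\Re(z^{-k})=\tfrac12(z^{-k}+\bar z^{-k})$, the line-element substitution $(z-z_0)\,d\calH^1=-i\rho\,dz$, the residue/binomial computation for the $z^{-k}$ part, the Fourier (spherical-harmonic) identification for \eqref{eq:expansion-newcenter}, and the binomial theorem for \eqref{eq:Q-Imk-est} all match. The only point of divergence is how the $\bar z^{-k}$ contribution is killed: you rewrite $\bar z^{-k}$ via $\bar z=\bar z_0+\rho^2/(z-z_0)$ as a rational function holomorphic on $\overline{B_\rho(z_0)}$ (pole at $z_0-\rho^2/\bar z_0$, distance $\rho^2/|z_0|>\rho$; handle $z_0=0$ as the degenerate polynomial case), whereas the paper conjugates the integral and deforms the contour outward to infinity using $k+m\ge 1$ — both are one-line applications of Cauchy's theorem and rely on the same circle identity $\overline{z-z_0}=\rho^2/(z-z_0)$.
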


\begin{proof}
  We note that, for $m = k = 0$, the value $I_{0,0} = 2\pi\rho$
  follows immediately from the definition of $I_{m,k}$. From now on,
  we can therefore assume $k+m \ge 1$.

  We can calculate the number $I_{m,k}$ with the help of the residue
  theorem. We decompose the integral according to $\Re(z^{-k}) =
  \frac12 z^{-k} + \frac12 \bar z^{-k}$. We calculate first the
  contribution of $\bar z^{-k}$,
  \begin{align*}
    \left( \int_{\del\Sigma} \bar z^{-k} (z-z_0)^m\, d\calH^1 \right)^{c.c.}
    &= \int_{\del\Sigma} z^{-k} (\overline{z-z_0})^m\, d\calH^1
    \stackrel{(a)}{=} 
    \rho^{2m} \int_{\del\Sigma} z^{-k}(z-z_0)^{-m}\, d\calH^1\\
    &\stackrel{(b)}{=} -i\rho\, 
    \rho^{2m} \int_{\del\Sigma} z^{-k}(z-z_0)^{-m-1}\, dz
    \stackrel{(c)}{=} 0\,,
  \end{align*}
  where the symbol $c.c.$ denotes complex conjugation. In the
  calculation above, we have used in equality (a) the fact that, for
  the argument $\vartheta\in \R$ of $(z-z_0)$, we have
  $(\overline{z-z_0})^m = \rho^m \exp(- im\vartheta) = \rho^{2m}
  (z-z_0)^{-m}$. In equality (b) we have introduced the complex line
  element $dz$ with the help of the tangential vector $i\,
  (z-z_0)/\rho$, substituting $(z-z_0)\, d\calH^1 = -i\rho\, dz$.  In
  equality (c) we have used the fact that the contour of integration
  can be deformed without changing the value of the integral; we
  deform the contour into increasingly large circles, and the limiting
  value is $0$. We exploited here $k+m \ge 1$.

  We have thus seen that one of the two contributions to $I_{m,k}$
  vanishes. Using again the tangential line element $(z-z_0)\,
  d\calH^1 = -i\rho\, dz$ we have
  \begin{align*}
    I_{m,k} &= \frac12 \int_{\del\Sigma} z^{-k}\, (z-z_0)^m\, d\calH^1
    = \frac{-i\rho}{2} \int_{\del\Sigma} z^{-k}\, (z-z_0)^{m-1}\, dz\,.
  \end{align*}
  At this point we have verified the claims for $m = 0$, since for
  $k\ge 1$ we can again move the contour of integration to $\infty$,
  hence the integral vanishes. In the case $m\ge 1$ we expand the term
  $(z-z_0)^{m-1}$ and find
  \begin{align*}
    I_{m,k} &= \frac{-i\rho}{2} \int_{\del\Sigma} z^{-k}\,
    \sum_{j=0}^{m-1} \vectdue{m-1}{j} z^{m-1-j} (-z_0)^j\ dz\,.
  \end{align*}
  By the residue theorem, the boundary integral is non-vanishing only
  for $j = m-k$, since for this value the exponent of $z$ is $m-1-j-k
  = -1$. We obtain
  \begin{align*}
    I_{m,k} &= \frac{-i\rho}{2} \int_{\del\Sigma} z^{-1}\,
     \vectdue{m-1}{m-k} (-z_0)^{m-k}\ dz
     = \pi\rho \vectdue{m-1}{m-k} (-z_0)^{m-k}
  \end{align*}
  by Cauchy's theorem. This proves the explicit formula \eqref
  {eq:Imk-explicit}.

  \smallskip For fixed $k\ge 1$, we expand the function
  $\Re(z^{-k})\lfloor \del B_\rho(z_0) : \del B_\rho(z_0) \to \R$ in
  spherical harmonics with coefficients $\gamma_l, \hat \gamma_l\in
  \R$,
  \begin{equation}
    \label {eq:expansion-newcenter-proof}
    \Re(z^{-k}) = \sum_{l\in \N} \gamma_l \Re (z-z_0)^l 
    + \hat\gamma_l \Im (z-z_0)^l\,.
  \end{equation}
  For arbitrary $m\ge 1$, we multiply this function with $\Re (z-
  z_0)^m$ and integrate over $\Gamma := \del B_\rho(z_0)$. Using
  orthogonality properties of spherical harmonics, we find
  \begin{align*}
    \int_\Gamma \Re(z^{-k}) \Re (z- z_0)^m\, d\calH^1
    = \sum_{l\in \N} \gamma_l \int_\Gamma \Re (z-z_0)^l \Re (z- z_0)^m\, d\calH^1
    = \gamma_m \pi \rho^{2m+1}\,.
  \end{align*}
  The left hand side is nothing else than $\Re(I_{m,k})$. Repeating
  the calculation for the imaginary part, we find $\gamma_m + i \hat
  \gamma_m = (\pi \rho^{2m+1})^{-1} I_{m,k}$. This verifies the
  expansion \eqref {eq:expansion-newcenter}.

  \smallskip Estimate \eqref {eq:Q-Imk-est} is obtained with a
  straightforward calculation. For $m\ge 1$ there holds
  \begin{align*}
    \sum_{k\in \N} Q^k |I_{m,k}| \le 
    \sum_{1\le k\le m} \pi \rho \vectdue{m-1}{m-k} |z_0|^{m-k} Q^k
    = \pi \rho Q\, (|z_0| + Q)^{m-1}\,.
  \end{align*}
  This completes the proof. 
\end{proof}

\subsection{Non-resonance for a non-concentric circular core}

The following result generalizes Proposition~\ref{prop:spher-incl} to
a geometry that is not radially symmetric.

\begin{theorem}[Non-resonance for non-concentric core]
  \label{thm:eccentric-localization}
  We consider a configuration of the following form. The coefficients
  are given by a circular core $\Sigma = B_\rho(z_0)$ with $0\in
  \Sigma\subset B_1(0)$ through \eqref {eq:a_eta}--\eqref
  {eq:A-def}. The source is located at a radius $q$ with $q > R^3$,
  and given as $f = F \calH^1\lfloor \del B_q(0)$ with $F = \sum_{k\ge
    1} \alpha_k \cos(k\theta) + \sum_{k\ge 1} \beta_k
  \sin(k\theta)$. We assume that the Fourier coefficients satisfy
  $\sum_{k} \left\{ |\alpha_k| + |\beta_k| \right\} \le C$.

  There exist $\eps_0 = \eps_0(q) > 0$ and $\eps_1 = \eps_1(q) > 0$
  such that, if the core is close to the unit disk in the sense that
  $|z_0| < \eps_0$ and $1-\eps_1 < \rho < 1$, the configuration is
  non-resonant.
\end{theorem}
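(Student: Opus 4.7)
I plan to extend the strategy of Proposition~\ref{prop:spher-incl} via the primal variational principle~\eqref{eq:primal}. Given $\eta=\eta_j\to 0$, the aim is to construct a pair $(v_\eta,w_\eta)\in\HH\times\HH$ satisfying the PDE constraint of~\eqref{eq:primal} with $I_\eta(v_\eta,w_\eta)$ bounded uniformly in $\eta$; non-resonance then follows from~\eqref{eq:lem-primal-conseq}. As in Proposition~\ref{prop:spher-incl}, I first reduce to purely cosine (or purely sine) data, split $f=f^\low+f^\high$ at a Fourier cutoff $k^*(\eta)$ with $R^{-k^*}<\eta$, and write $v_\eta=v_\eta^\low+v_\eta^\high$, requiring $\nabla\cdot(A\nabla v_\eta^\low)=f^\low$ exactly on $\R^2$, while $v_\eta^\high$ and $w_\eta$ handle the high-frequency jump at $\partial B_q(0)$ and the residual on $\partial B_R(0)\cup\partial\Sigma$ via the same mechanism as~\eqref{eq:v-eta-high}--\eqref{Laplacew}. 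Because $\partial B_q(0)$ and $\partial B_R(0)$ are still circles about the origin, the high-frequency part can be built from the radial plasmon waves~\eqref{eq:hat-V-k-radial} verbatim.

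The new ingredient is the mode-by-mode construction of $v_\eta^\low=\sum_{k\le k^*}\lambda_k\hat v_k$, an analogue of~\eqref{eq:hat-v-k-radial} adapted to the non-concentric core. I would take $\hat v_k$ to be the usual radial form $c\,r^{\pm k}\cos(k\theta)$ outside $B_R(0)$ and in the annulus $B_q(0)\setminus B_R(0)$, so that matching at $\partial B_q(0)$ and $\partial B_R(0)$ decouples across Fourier modes exactly as in the concentric case. In the shell $B_R(0)\setminus\Sigma$ I would keep the radial ansatz $r^{-k}\cos(k\theta)$ but add correction harmonics $\sum_{m\ge 1}\bigl(\mu_m^{(k)}\Re+\nu_m^{(k)}\Im\bigr)(z-z_0)^{-m}$, which are singular at $z_0$ yet regular in the shell since $z_0\in\Sigma$; inside $\Sigma$ I would take the interior expansion $\sum_{m\ge 0}\bigl(\tilde\mu_m^{(k)}\Re+\tilde\nu_m^{(k)}\Im\bigr)(z-z_0)^m$. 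Continuity and the sign-flipped flux condition $\partial_\nu v|_{\rm core}+\partial_\nu v|_{\rm shell}=0$ at $\partial\Sigma$ couple all modes: by Lemma~\ref{lem:interactionImk} the coupling between the centered harmonics $\Re(z^{\pm k})$ and the shifted basis $\Re(z-z_0)^m$ is governed by the interaction coefficients $I_{m,k}$, so the resulting linear system for the unknown coefficients takes the schematic form of a perturbation of the identity on a suitable weighted sequence space.

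The smallness hypotheses $|z_0|<\eps_0$ and $1-\rho<\eps_1$ are precisely what is needed to make this perturbation a contraction: the key quantitative input is the summability estimate~\eqref{eq:Q-Imk-est}, with the weight $Q$ chosen in a geometric window determined by $q$, $R$, and $\rho$. A Neumann series then yields the correction coefficients together with the geometric decay required to control their energy. Substituting into the energy calculation in the spirit of~\eqref{vlow-est}--\eqref{v-low-2}, the radial contribution reproduces the $(R^3/q^2)^{k^*(\eta)}$-type behavior of the concentric case, while the non-concentric corrections produce additional geometric factors controlled via~\eqref{eq:Q-Imk-est}. Balancing this against the $w_\eta$-energy~\eqref{w-en-est} along the cutoff~\eqref{eq:def-k-ast} leads to the sharper threshold $q>R^3$, in place of $q>R^{3/2}$ from the radial case. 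The $\ell^1$-summability hypothesis on $(\alpha_k,\beta_k)$ enters here to ensure that the sum of mode-by-mode energy bounds converges.

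The principal obstacle will be making this perturbative construction uniform in $k\le k^*(\eta)$. Since $k^*(\eta)\sim\log(1/\eta)/\log R\to\infty$, the thresholds $\eps_0(q)$ and $\eps_1(q)$ must be chosen small enough that the contraction bound for the coupling operator, the geometric decay in~\eqref{eq:Q-Imk-est}, and the $k$-dependent growth of the radial basis can all be balanced simultaneously over the entire range of active modes. This delicate coupling between the perturbation parameters $\eps_0,\eps_1$ and the resonance threshold $q>R^3$ is the central technical point that the proof must quantify.
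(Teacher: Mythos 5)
Your overall strategy (primal variational principle, build a trial pair $(v_\eta,w_\eta)$ satisfying the constraint with bounded $I_\eta$, use the interaction coefficients $I_{m,k}$ to handle the off-center circle) is aligned with the paper, but your construction differs in two essential ways, and one of them creates a genuine gap.

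The paper does \emph{not} solve the interface conditions on $\del\Sigma$ exactly, and it does not truncate the source at a Fourier cutoff $k^*$. Instead it takes $V_\eta=\sum_{k\in\N}\lambda_k\tilde v_k$ over \emph{all} $k$, where $\tilde v_k$ equals the radial $\hat v_k$ of \eqref{eq:hat-v-k-radial} on $\R^2\setminus\Sigma$ and is simply the harmonic extension of the trace inside $\Sigma$. This leaves an explicit error $F$ supported on $\del\Sigma$, which is expanded in shifted harmonics $(z-z_0)^m$ using \eqref{eq:expansion-newcenter}; the cutoff $m^*(\eta)$ is taken in this $m$-index, not in $k$. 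The high-frequency part $F^\high$ is sent to $w_\eta$ via $\Delta w_\eta=F^\high$, and the low-frequency part $F^\low$ is annihilated \emph{exactly in closed form} by adding a finite combination of the $\del B_R(0)$-plasmon waves $\hat V_k,\tilde V_k$ to $v_\eta$ (see \eqref{eq:v-eta-finally}): these satisfy the $A$-transmission condition across $\del B_R(0)$ by construction, so adding them introduces no new defect anywhere except on $\del\Sigma$, where they reproduce precisely $F^\low$. No fixed-point or Neumann-series argument is needed; the smallness of $\eps_0,\eps_1$ enters only through the decay estimates \eqref{eq:mu_m_out} and \eqref{eq:est-beta-k}.

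The gap in your version is the assertion that, after you add the correction harmonics $\sum_m\mu_m^{(k)}\Re(z-z_0)^{-m}$ and $\sum_m\nu_m^{(k)}\Im(z-z_0)^{-m}$ to the shell ansatz, ``matching at $\del B_q(0)$ and $\del B_R(0)$ decouples across Fourier modes exactly as in the concentric case.'' This is not consistent: the functions $(z-z_0)^{-m}$ are nonvanishing (with nonvanishing flux) on $\del B_R(0)$ and, when expanded about $z=0$, mix all centered modes $z^{-l}$ with $l\ge m$. So your shell corrections necessarily reintroduce a coupled problem at $\del B_R(0)$ (and hence also at $\del B_q(0)$), which your ansatz does not account for. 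To make your route work you would have to allow correction terms in every region and solve a genuinely two-interface, all-mode system; the ``perturbation of the identity'' would then act on a much larger coupled space and would need a uniform-in-$k$ contraction estimate, none of which you supply. The paper sidesteps the whole issue by choosing the correcting functions $\hat V_k,\tilde V_k$ to be $A$-harmonic across $\del B_R(0)$ from the outset, so the only place an extra defect can appear is $\del\Sigma$, which is exactly what is being corrected.

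Two smaller points. First, your cutoff $k^*$ with $R^{-k^*}<\eta$ is the radial choice; the paper's cutoff $m^*$ is dictated by $(\rho/R)^{2m^*}\le\eta$ on the error side, which is the quantity actually balancing the $w_\eta$-energy against the $V_\eta$-energy here. Second, you correctly identify that the $\ell^1$ assumption compensates for the loss of Fourier orthogonality; this matches the paper's use of it in \eqref{eq:V-eta-energy-ecc}.
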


\noindent\textit{Remark.} We will provide an explicit condition regarding the
smallness of $\eps_0$ and $\eps_1$, see \eqref {eq:choice-eps0eps1}
and \eqref {eq:eps0eps1-2}.

Note that we consider sources at the radius $q > R^3$. We know that
the source radius $q$ must satisfy $q > R^* = R^{3/2}$ to be
non-resonant, see Theorem \ref{thm:inner-resonance}. The lower bound
$R^3$ is probably not optimal.

\begin{proof} 
  As in the proof of Proposition \ref {prop:spher-incl}, we can
  decompose the expansion of $F$ into two parts and can write $F =
  F_\even + F_\odd$. By linearity of the equations it is sufficient to
  show the non-resonance property for the two contributions
  separately. Without restriction of generality, we study in the
  following $f = \sum_k \alpha_k f_k$ with $f_k = f_k^q$ of \eqref
  {eq:form-f-special}, and coefficients $(\alpha_k)_k \in l^2(\N;\R)
  \cap l^1(\N;\R) $.  

  We fix a sequence $\eta = \eta_j \searrow 0$.  Our aim is to
  construct a sequence $(v_\eta, w_\eta)$ of bounded energy $I_\eta$,
  and to use the primal variational principle \eqref{eq:primal} to
  show non-resonance.

  \medskip
  \noindent\emph{Step 1. Construction of comparison functions.} 
  We will use a construction similar in spirit to that used in the
  proof of Proposition \ref {prop:spher-incl}. The main difference is
  that the functions $\hat v_k(x)$ of \eqref {eq:hat-v-k-radial} are
  not suited for the eccentric core $\Sigma$. We will replace these
  functions by $\tilde v_k(x)$ of \eqref{eq:tilde-v-k-radial} defined
  in Step~1a. We then need to correct errors on $\del \Sigma$ due to
  the non-concentric geometry. This is done in Step~1b and Step~1c.

  \smallskip 
  \noindent\emph{Step 1a. Construction of the main part of $v_{\eta}$.}  
  We first construct the main part of $v_{\eta}$, denoted by
  $V_{\eta}$, following the construction of Proposition \ref
  {prop:spher-incl} with the new elementary functions
  \begin{equation}
    \label{eq:tilde-v-k-radial}
    \tilde v_k(x) :=
    \begin{cases}
      \tilde{v}_k(x) \vert_{\Sigma}\quad  &\text{ for } x \in \Sigma,\\
      r^{-k} \cos(k\theta)\quad &\text{ for } x\in B_R(0)\setminus \Sigma,\\
      r^k R^{-2k} \cos(k\theta)\quad &\text{ for } R < |x|\le q,\\
      r^{-k} (q/R)^{2k} \cos(k\theta)\quad &\text{ for } q <|x|,
    \end{cases}
  \end{equation}
  where $\tilde{v}_k \vert_{\Sigma}$ is chosen so that $\tilde{v}_k$
  is harmonic in $\Sigma$ and continuous on $\del\Sigma$.
  
  The function $V_\eta$ is constructed as a linear combination of the
  elementary functions $\tilde v_k$. The coefficients $\lambda_k$ are
  chosen to satisfy $\nabla\cdot (A\nabla V_\eta) = f$ away from
  $\del\Sigma$.  This leads to
  \begin{equation}
    \label{eq:v-eta-low-ecc}
    V_\eta := \sum_{k \in \N} \lambda_k\, \tilde v_k\,,\qquad
    \lambda_k := -\alpha_k \frac{q}{2k} q^{-k} R^{2k}\,.
  \end{equation}
  The coefficients $\lambda_k$ are actually identical to those in
  \eqref {eq:v-eta-low}. This is because $\tilde v_k$ coincides with
  $\hat v_k$ on $\R^2\setminus \Sigma$. 

  \smallskip 
  \noindent\emph{Step 1b.  Evaluation of errors on $\del\Sigma$.}  In
  the case of concentric spheres, the construction could be finished
  at this point, the distinction into high and low frequencies was
  only necessary in order to find the optimal bound for $q$.  Instead,
  since we now study a core $\Sigma$ that is not concentric, the
  functions $\tilde v_k$ are not solutions of $\nabla\cdot (A\nabla v)
  = 0$ on $\del\Sigma$. Hence, we need to correct the error on $\del
  \Sigma$:
  \begin{equation}
    \label{eq:F-error}
    \begin{aligned}
      F &:= \nabla \cdot (A \nabla V_\eta) - f 
      = \sum_{k \in \N} \lambda_k   
      \left\{ \del_\nu \tilde v_k|_\out + \del_\nu \tilde v_k|_\inn
      \right\} \, \calH^1\lfloor \del\Sigma \\
      & = \sum_{k \in \N} \lambda_k \del_\nu \tilde v_k|_\out \,
      \calH^1\lfloor \del\Sigma + \sum_{k \in \N} \lambda_k \del_\nu
      \tilde v_k|_\inn \, \calH^1\lfloor \del\Sigma \\
      & \equiv F_\out\, \calH^1\lfloor \del\Sigma + F_\inn\,
      \calH^1\lfloor \del\Sigma,
    \end{aligned}
  \end{equation}
  where the last equality gives the definition of $F_\out$ and
  $F_\inn$.

  We start with $F_\out$, the contributions from $\del_\nu \tilde
  v_k|_\out$, which can be calculated explicitly, since $\tilde v_k$
  is defined in \eqref {eq:tilde-v-k-radial} as $\tilde v_k(z) =
  r^{-k} \cos(k\theta) = \Re(z^{-k})$ on $x\in B_R(0)\setminus
  \Sigma$.  As in \eqref {eq:complex-normal-der} we calculate the
  normal derivative, which we then expand using
  \eqref{eq:expansion-newcenter},
  \begin{align*}
    \del_\nu \tilde v_k|_\out = \frac{-k}{\rho} \Re (z^{-k})
    =  \Re \sum_{m\ge k} \frac{-k}{\pi \rho^{2m+2}} \bar I_{m,k}\cdot
    (z- z_0)^m\,.
  \end{align*}
  Hence, we have evaluated the first part of the error in
  \eqref{eq:F-error} to be
  \begin{equation}
    \label{eq:F-out-proof}
    F_\out 
    = \sum_{k,m \ge 1} \lambda_k \frac{-k}{\pi \rho^{2m+2}}\
    \Re\left( \bar I_{m,k}\cdot (z- z_0)^m\right)
    = \sum_{m \ge 1} \Re\left( \mu_m^\out \cdot (z- z_0)^m\right)
  \end{equation}
  with 
  \begin{equation}\label{eq:mu-out}
    \mu_m^\out := \sum_{1\le k\le m} \lambda_k \frac{-k}{\pi \rho^{2m+2}}\,
    \bar I_{m,k}\,.
  \end{equation}
  We next estimate the decay of $|\mu_m^\out|$ as $m\to \infty$ with
  the help of estimate \eqref{eq:Q-Imk-est} for $|I_{m,k}|$. We set $Q
  := R^2/q < 1/R$ and use the sequence $\beta_k := Q^k
  |I_{m,k}|$. Using the elementary estimate
  \begin{equation*}
    \| (\beta_k)_k \|_{l^2}^2 \le \|(\beta_k)_k\|_{l^\infty} \|
    (\beta_k)_k \|_{l^1} \le \| (\beta_k)_k \|_{l^1}^2,
  \end{equation*}
  we obtain
  \begin{equation}\label{eq:mu-out-est}
    \begin{split}
      | \mu_m^\out | 
      &\le C \sum_{k\le m} |\alpha_k|\, (R^2/q)^k\rho^{-2m} | I_{m,k}| 
      \le C \| (\alpha_k)_k \|_{l^2}\ \rho^{-2m}\, \| (\beta_k)_k \|_{l^1}\\
      &\le C \| (\alpha_k)_k \|_{l^2}\ \rho^{-2m}\, 
      (|z_0| + (R^2/q))^{m}\,.      
    \end{split}
  \end{equation}
  To guarantee fast decay of $|\mu_m^\out|$, we choose 
  $\eps_0$ and $\eps_1$ such that 
  \begin{equation}
    \label{eq:choice-eps0eps1}
    \frac{1}{(1-\eps_1)^2} \left(\eps_0 + \frac{R^2}{q}\right) 
    < \frac1{R}.
  \end{equation}
  This is possible since $q > R^3$.  Combined with our assumptions
  $|z_0|<\eps_0$, and $1-\rho<\eps_1$, we have obtained the estimate
  \begin{equation} \label{eq:mu_m_out}
    \lvert \mu_m^\out \rvert \le C \| (\alpha_k)_k \|_{l^2}\ R^{-m}.
  \end{equation}

  We next study the other error contribution $F_{\inn}$ in
  \eqref{eq:F-error}. Our goal is to express, analogous to \eqref
  {eq:F-out-proof},
  \begin{equation}
    \label{eq:F-inn-proof}
    F_\inn = \sum_{k \in \N} \lambda_k \del_\nu \tilde v_k|_\inn 
    = \sum_{m \ge 1} \Re\left( \mu_m^\inn \cdot (z- z_0)^m\right)\,,
  \end{equation}
  and to provide an estimate for the coefficients $\mu_m^\inn$.

  As a first step we expand the function $\tilde v_k(z) = \Re(z^{-k})$
  on $\del\Sigma$, which was done in \eqref
  {eq:expansion-newcenter}. Since both components of the function
  $z\mapsto (z- z_0)^m$ are harmonic in $\Sigma$, the expansion of the
  boundary values provides us also with the harmonic extension $\tilde
  v_k \vert_{\Sigma}$. Formula \eqref {eq:expansion-newcenter} yields
  \begin{align*}
    \tilde v_k(z) = \Re \sum_{m\ge k} (\pi \rho^{2m+1})^{-1} \bar
    I_{m,k}\cdot (z- z_0)^m\quad \text{ for } z \in \Sigma\,.
  \end{align*}
  We next evaluate the normal derivative, using again
  \eqref{eq:complex-normal-der}.  We find, on $\del\Sigma$,
  \begin{align*}
    \del_\nu \tilde v_k|_\inn =  \sum_{m\ge k} (\pi \rho^{2m+1})^{-1} \Re (I_{m,k})
    \frac{m}{\rho} \Re (z- z_0)^{m}
    - \sum_{m\ge k} (\pi \rho^{2m+1})^{-1} \Im (I_{m,k})
    \frac{m}{\rho} \Im (z- z_0)^{m}\,.
  \end{align*}
  This provides for the normal derivative from inside $\Sigma$ the
  expansion \eqref {eq:F-inn-proof} with the coefficients
  \begin{equation}
     \mu_m^\inn = \sum_{k\le m} \lambda_k (\pi \rho^{2m+1})^{-1}
    I_{m,k} \frac{m}{\rho}\,.
  \end{equation}
  This expression for $\mu_m^\inn$ is analogous to \eqref {eq:mu-out}
  for $\mu_m^\out$. In particular, $\mu_m^\inn$ can be treated
  similarly to $\mu_m^\out$ in \eqref {eq:mu-out-est}.

  To sum up, we have obtained
  \begin{equation}
    \label{eq:F-error-expanded}
    F := \Re  \sum_{m = 1}^\infty 
    \mu_m\cdot (z-z_0)^m\ \calH^1\lfloor \del\Sigma\,,
    \quad \text{with }\
    | \mu_m | \le C \| (\alpha_k)_k \|_{l^2}\ R^{-m}\,. 
  \end{equation}

  \smallskip 
  \noindent\emph{Step 1c. Correcting the error on $\del\Sigma$.}  We
  now correct the error term $F$ given by \eqref
  {eq:F-error-expanded}. We recall that the error was introduced by
  $V_\eta$ through $\nabla\cdot (A\nabla V_\eta) = F$.

  We define $m^*$ to be the smallest integer with $(\rho/R)^{2m^*} \le
  \eta$ and decompose accordingly
  \begin{align*}
    F &= F^\low \ \calH^1\lfloor \del\Sigma+ F^\high\ \calH^1\lfloor \del\Sigma \\
    &\equiv
    \Re  \sum_{m \le m^*} \mu_m\cdot (z-z_0)^m\ \calH^1\lfloor \del\Sigma
    + \Re \sum_{m > m^*} \mu_m\cdot (z-z_0)^m\ \calH^1\lfloor \del\Sigma\,.
  \end{align*}
  We will correct the high frequency error $F^\high$ by taking
  $w_{\eta}$ to be the solution to $\Delta w_\eta = F^\high$.

  The low frequency error $F^\low$ must be treated with a quite
  different approach.  The basic idea is to use the perfect plasmon
  waves $\hat V_k$ and $\tilde V_k$ as in
  \eqref{eq:choice-psi-emptyset}. We define $\hat V_k(z) = \Re(z^k)$
  and $\tilde V_k(z) = \Im(z^k)$ for $z\in B_R(0)$, and $\hat V_k(z) =
  R^{2k} \Re(z^{-k})$ and $\tilde V_k(z) = R^{2k} \Im(z^{-k})$ for
  $z\in \R^2\setminus B_R(0)$.  These functions are perfect plasmon
  waves for the curve $\del B_R(0)$, but they are not solutions on
  $\del\Sigma$. In other words, the nonzero functions $\nabla\cdot
  (A\nabla \hat V_k)$ and $\nabla\cdot (A\nabla \tilde V_k)$ are
  concentrated on $\del\Sigma$.

  The normal derivatives on $\del\Sigma$ of these functions have been
  used before.  There holds $\nu\cdot \nabla \hat V_k = \rho^{-1} k
  \Re ((z-z_0)(z^{k-1}))$, compare \eqref
  {eq:complex-normal-der}. Similarly, we have for the imaginary part
  the normal derivative $\nu\cdot \nabla \tilde V_k =
  \Re(\overline{z-z_0}\cdot ik \rho^{-1} \bar z^{k-1}) = \rho^{-1} k \Im
  ((z-z_0)(z^{k-1}))$.

  The fact that the functions $\hat V_k$ and $\tilde V_k$ are
  \emph{not} solutions on $\del\Sigma$ can be used to our
  advantage. We expand the low frequency error $F^\low$ in terms of
  the residuals of these functions. Expanding with respect to the
  center $z = 0$, we find, on $\partial \Sigma$
  \begin{align*}
    F^\low &= \Re  \sum_{m = 1}^{m^*} \mu_m (z-z_0)^{m-1} (z-z_0)
    = \Re  \sum_{m = 1}^{m^*} \mu_m \sum_{k=1}^{m} 
     \vectdue{m-1}{k-1} z^{k-1} (-z_0)^{m-k} (z-z_0)\\
     &= \sum_{k = 1}^{m^*} \left( \Re\, \sum_{m = k}^{m^*} \mu_m 
       \vectdue{m-1}{k-1} (-z_0)^{m-k} \right) \Re\, (z^{k-1}(z-z_0))\\
     &\qquad - \sum_{k = 1}^{m^*} \left( \Im\, \sum_{m = k}^{m^*} \mu_m 
       \vectdue{m-1}{k-1} (-z_0)^{m-k} \right) \Im\, (z^{k-1}(z-z_0))\\
     &= \sum_{k = 1}^{m^*} \hat\beta_k\, \nu\cdot \nabla \hat V_k
     + \sum_{k = 1}^{m^*} \tilde\beta_k\, \nu\cdot \nabla \tilde V_k\,,
  \end{align*}
  with the real coefficients $\hat\beta_k$ and $\tilde \beta_k$ given by
  \begin{align*}
    \hat\beta_k - i \tilde \beta_k = \frac{\rho}{k} \sum_{m = k}^{m^*} 
    \mu_m \vectdue{m-1}{k-1} (-z_0)^{m-k}\,.
  \end{align*}
  We use now the estimate \eqref {eq:F-error-expanded}, $|\mu_m| \le C
  R^{-m}$, to estimate the complex coefficient $\beta_k = \hat\beta_k
  + i \tilde \beta_k$,
  \begin{align*}
    |\beta_k| \le C \sum_{m = k}^{m^*} |\mu_m| \vectdue{m-1}{k-1}
    |z_0|^{m-k} \le C \sum_{m = k}^{m^*} R^{-m} (1+|z_0|)^{m-1}\,.
  \end{align*}
  We choose $\eps_0$ such that 
  \begin{equation}
    \label{eq:eps0eps1-2}
    (1+\eps_0) \le R.
  \end{equation}
  Hence, as $(1+|z_0|)/R < 1$, we have 
  \begin{align}\label{eq:est-beta-k}
    |\beta_k| \leq C \sum_{m = k}^{m^*} R^{-m} (1+|z_0|)^{m-1} \le C
    R^{-k} (1+|z_0|)^{k-1}\,.
  \end{align}
  We can therefore compensate the low frequency errors introduced by
  $V_\eta$ of \eqref{eq:v-eta-low-ecc} using the functions $\hat V_k$
  and $\tilde V_k$. We recall that $\nabla\cdot A\nabla \hat V_k = -2
  \del_\nu \hat V_k\,\calH^1\lfloor \del\Sigma$ and set therefore
  \begin{equation}
    \label{eq:v-eta-finally}
    v_\eta := V_\eta + \frac12 \sum_{k = 1}^{m^*} \hat\beta_k \hat V_k
    + \frac12 \sum_{k = 1}^{m^*} \tilde\beta_k \tilde V_k\,.
  \end{equation}
  With this choice of $v_\eta$, we have $\nabla\cdot (A\nabla v_\eta)
  = F + f - F^\low = f + F^\high$.  If we choose $w_{\eta}$ as the
  solution of $\Delta w_\eta = F^\high$, we obtain
  \begin{equation*}
    \nabla\cdot (A\nabla v_\eta) - \Delta w_{\eta} =  f + F^\high - F^{\high} = f\,.
  \end{equation*}
  In particular, the constraint of \eqref{eq:primal} is satisfied.

  \medskip
  \noindent\emph{Step 2. Calculation of energies}.
  It remains to calculate the energy $I_{\eta}(v_{\eta}, w_{\eta})$.
  By the triangle inequality, the energy of $v_{\eta}$ is bounded if
  we can control the energy of each term on the right hand side of
  \eqref{eq:v-eta-finally}.

  Recall that $\tilde{v}_k$ defined in \eqref{eq:tilde-v-k-radial}
  agrees with $\hat{v}_k$ defined in \eqref{eq:hat-v-k-radial} on
  $\R^2 \setminus \Sigma$.  Accordingly, the energy contribution of
  $V_\eta$ is bounded by a similar argument as in the proof of
  Proposition \ref{prop:spher-incl}. But since we do not have
  orthogonality of the basis functions, we calculate here with the
  $l^1$-assumption on $\alpha_k$.  Using \eqref {eq:v-eta-low-ecc},
  the triangle inequality, and the fact that we are in the case $q >
  R^2$, we find
  \begin{equation}
    \label{eq:V-eta-energy-ecc}
    \begin{split}
      \left( \eta \int |\nabla V_\eta|^2\right)^{1/2}
      &= \sqrt{\eta} \|\nabla V_\eta\|_{L^2}
      \le \sqrt{\eta}  \sum_{k} |\lambda_k|\, 
      \left(\int |\nabla \tilde v_k|^2\right)^{1/2}\\
      &\le C \sqrt{\eta} \sum_{k} |\alpha_k|\, q^{-k} R^{2k}\,
      q^{k} R^{-2k} 
      = C \sqrt{\eta} \sum_{k} |\alpha_k|
      \le C \sqrt{\eta} \,.
    \end{split}
  \end{equation}
  
  The calculations for the energies related to the corrections
  involving $\hat V_k$ and $\tilde V_k$ are identical, we therefore
  treat here only the contribution of $\sum \hat\beta_k \hat
  V_k$. Exploiting orthogonality and the estimate
  \eqref{eq:est-beta-k} for $\beta_k$, we find
  \begin{align*}
    &\eta \int \left|\nabla \sum_{k = 1}^{m^*} \hat\beta_k \hat
      V_k\right|^2
    \le C \eta \sum_{k = 1}^{m^*} |\hat\beta_k|^2 \int |\nabla \hat V_k|^2\\
    &\qquad \le C \eta \sum_{k = 1}^{m^*} R^{-2k} (1+|z_0|)^{2k} k 
    R^{2k} \le C (\rho / R)^{2m^{\ast}} (m^*)^2 (1+|z_0|)^{2m^*} \,,
  \end{align*}
  where in the last inequality, we have used the fact that $\eta \leq
  C (\rho / R)^{2m^{\ast}}$ by our choice of $m^{\ast}$. By the
  assumption $\rho < 1$ and the choice of $\eps_0$ in
  \eqref{eq:eps0eps1-2}, the energy is bounded.
  
  \smallskip

  It remains to estimate the energy contribution of $w_{\eta}$, given
  by the solution to $\Delta w_{\eta} = F^{\high}$. Note that the
  squared norm of $F^\high$ can be estimated by \eqref
  {eq:F-error-expanded} as
  \begin{align*}
    \| F^\high \|_{H^{-1}}^2 \le C \sum_{m > m^*} |\mu_m|^2 \rho^{2m}
    \le C \| (\alpha_k)_k \|_{l^2}^2\, (\rho/R)^{2m^*} \le C \eta\,.
  \end{align*}
  By the properties of the solution operator $(-\Delta)^{-1}$ acting
  on functions in $H^{-1}(\R^2)$ with vanishing average, we conclude
  that the energy contribution $\eta^{-1} \int |\nabla w_\eta|^2 \le C
  \eta^{-1} \eta \le C$ is bounded.

  The proof of non-resonance is complete by inequality
  \eqref{eq:lem-primal-conseq}.
\end{proof}

Regarding the last proof we remark that the decomposition of $F$ into
high and low frequency parts was only necessary in order to obtain an
improved lower bound for $q$.

\bigskip 
\noindent {\bf Acknowledgment.} This work was performed while BS was
visiting the Courant Institute in New York, partially funded by the
Deutsche Forschungsgemeinschaft under contract SCHW 639/5-1. The kind
hospitality is gratefully acknowledged.  The research of RVK and MIW
was partially supported by the National Science Foundation through
grants DMS-0807347 and DMS-1008855 respectively.

\appendix

\section{Are there perfect plasmon waves in dimensions
 $n\ne2$?}\label{perfectplasmons?}

The {\it perfect plasmon waves} $\hat\psi_k$, defined in
\eqref{eq:choice-psi-emptyset}, play a central role in the
construction of trial functions in our variational arguments. In this
section we consider the question of whether such plasmons exist in
dimensions $n\ne2$. Plasmon solutions may be viewed as solutions to
the {\it plasmonic eigenvalue problem} (see \cite{Grieser:12} and
references cited therein). Specifically, for a fixed radius, $R>0$, we
seek $\psi(x),\ x\in\mathbb{R}^n$, such that
\begin{align}
  \nabla\cdot \left(A\nabla \psi\right) &=\ 0,\ \ \psi\to0\ \ {\rm
    as}\ \ r=|x|\to\infty
 \label{plasmonpde}
\end{align}
where
\begin{equation}
  A(x) = 
  \begin{cases}
    1\ , &  |x|>R;\\
    \eps\ , & |x|<R.\\
  \end{cases}
  \label{plasmonicthing}
\end{equation}
This is the situation where we take the core $\Sigma=\emptyset$.
\medskip

A function $\psi$ is a weak solution of
\eqref{plasmonpde}-\eqref{plasmonicthing} if and only if
\begin{align}
  & \Delta \psi\ =\ 0,\ |x|\ne R \label{harmonic}\\
  &\left.\psi\right|_{|x|=R^-}\ =\ \left.\psi\right|_{|x|=R^+}\ \
  \textrm{(continuity)} \label{continuity}\\
  & -\eps\ \left.\frac{\partial \psi}{\partial r}\right|_{|x|=R^-}\ 
  +\ \left.\frac{\partial \psi}{\partial r}\right|_{|x|=R^+}\ =\ 0\ 
  \ \textrm{(flux continuity)}\label{flux}\\
  &\psi\to0\ {\rm as}\ |x|\to\infty\ . \label{decay}
\end{align}

We are interested in the case $\eps=-1$ and we show

\medskip
\noindent {\it Claim 1:} There are no plasmons localized on spheres in
dimension $n=3$.

\medskip
\noindent {\it Claim 2:} There \underline{are} plasmons localized at
the plane separating half-spaces in \underline{any} spatial dimension
$n\ge2$. That is, for any $n\ge2$, there are solutions of
\eqref{harmonic}-\eqref{decay} in the case where the spherical
interface $|x|=R$ is replaced by the planar interface $x_n=0$.

\medskip
\noindent {\it Proof of Claim 1:}\ Let $Y^l(\Omega),\ \Omega\in S^2$
denote any spherical harmonic of order $l$. That is,
\begin{equation}
-\Delta_{S^2}Y^l(\Omega)\ =\ l(l+1)Y^l(\Omega),\ \ l\ge0.
\label{eig}
\end{equation}
The corresponding solutions of Laplace's equation in dimension $3$ are:
\begin{equation}
r^l\ Y^l(\Omega)\ \ {\rm and}\ \ r^{-l-1}\ Y^l(\Omega),\ \ r=|x|.
\label{harmonics}
\end{equation}
Regularity away from the interface and decay at infinity imply
\begin{equation*}
 \psi(x) = 
 \begin{cases}
   r^l \ Y^l(\Omega)\ , &  0\le r\le R;\\
   c\ r^{-l-1} \ Y^l(\Omega)\ , & r> R.\\
 \end{cases}
\end{equation*}
The constant $c$ is determined by the interface conditions.
Continuity implies $c=R^{2l+1}$. Furthermore, the left hand side of
\eqref{flux} is equal to $-R^{l-1}\ne0$. Hence there are no perfect
spherical plasmon waves in dimension $3$.  A similar calculation holds
in all dimensions $n\ge 4$, where
$-\Delta_{S^{n-1}}Y^l(\Omega)=l(l+n-2)Y^l(\Omega)$.

\begin{remark} Note that
 \begin{equation*}
   \psi_l(x) = \left\{ 
     \begin{array}{cl} r^l \ Y^l(\Omega)\ , &  0\le r\le R\\
       R^{2l+1}\ r^{-l-1} \ Y^l(\Omega)\ , & r>R\\
     \end{array} \right.  \ .
 \end{equation*} 
 is a plasmonic eigenstate with corresponding eigenvalue
 $\eps_l=-\frac{l}{l+1}$. This sequence of plasmonic eigenvalues
 approaches $-1$ as $l\to\infty$.
\end{remark}

\noindent {\it Proof of Claim 2:}\ For any $n\ge2$, we write
$x\in\mathbb{R}^n$ as $x=(x_\perp,x_n)$ and define:
\begin{equation*}
 \psi(x;\xi) = \left\{ 
   \begin{array}{cl} e^{-|\xi_n|x_n}\ e^{i\xi_\perp\cdot x_\perp} , &  x_n>0\\
     e^{|\xi_n|x_n}\ \ e^{i\xi_\perp\cdot x_\perp} , & x_n<0\\
      \end{array} \right.  
\end{equation*}
where $\xi_n\in \mathbb{R}$ and $\xi_\perp\in\mathbb{R}^{n-1}$ are chosen so that:
\[ |\xi_n|^2\ =\ \xi_\perp\cdot \xi_\perp\ .\] Then $\psi(x;\xi)$ is a
plasmon wave.

\bibliography{lit_cloaking}

\end{document}